\providecommand{\tightlist}{%
  \setlength{\itemsep}{0pt}\setlength{\parskip}{0pt}}
\declaretheorem{theorem}
\declaretheorem[sibling=theorem]{lemma}
\declaretheorem[sibling=theorem]{proposition}
\declaretheorem[sibling=theorem]{corollary}
\declaretheorem{question}
\declaretheorem[numberwithin=theorem]{claim}
\declaretheorem[style=definition]{definition}
\newcommand{\M}{\mathcal{M}}
\newcommand{\A}{\mathcal{A}}
\newcommand{\B}{\mathcal{B}}
\newcommand{\C}{\mathcal{C}}
\newcommand{\N}{\mathcal{N}}
\newcommand{\tm}{\text{-}}
\renewcommand{\L}{\mathcal{L}}
\newcommand{\mc}[1]{\mathcal{#1}}
\newcommand{\ol}[1]{\bar{#1}}
\newcommand{\LR}{\iff}
\DeclareMathOperator{\restrict}{\upharpoonright}
\newcommand{\Pinf}[1]{\Pi^{\mathrm{in}}_{#1}}
\newcommand{\Sinf}[1]{\Sigma^{\mathrm{in}}_{#1}}
\newcommand{\Dinf}[1]{\Delta^{\mathrm{in}}_{#1}}
\newcommand{\PA}{\mathrm{PA}}
\renewcommand{\phi}{\varphi}
\newcommand{\bigwwedge}{%
  \mathop{
    \mathchoice{\bigwedge\mkern-15mu\bigwedge}
               {\bigwedge\mkern-12.5mu\bigwedge}
               {\bigwedge\mkern-12.5mu\bigwedge}
               {\bigwedge\mkern-11mu\bigwedge}
    }
}
\newcommand{\bigvvee}{%
  \mathop{
    \mathchoice{\bigvee\mkern-15mu\bigvee}
               {\bigvee\mkern-12.5mu\bigvee}
               {\bigvee\mkern-12.5mu\bigvee}
               {\bigvee\mkern-11mu\bigvee}
    }
}
\newmdtheoremenv[backgroundcolor=cyan]{theorem-prove}{Theorem}[theorem]
\newmdtheoremenv[backgroundcolor=cyan]{lemma-prove}{Lemma}[theorem]
\newmdtheoremenv[backgroundcolor=cyan]{proposition-prove}{Proposition}[theorem]
\newmdtheoremenv[backgroundcolor=cyan]{corollary-prove}{Corollary}[theorem]
\newmdtheoremenv[backgroundcolor=yellow!40]{theorem-check}{Theorem}[theorem]
\newmdtheoremenv[backgroundcolor=yellow!40]{lemma-check}{Lemma}[theorem]
\newmdtheoremenv[backgroundcolor=yellow!40]{proposition-check}{Proposition}[theorem]
\def\abar{{\bar{a}}}
\def\bbar{{\bar{b}}}
\def\a{\alpha}
\def\b{\beta}
\def\om{\omega}
\title{The structural complexity of models of arithmetic}
\author{Antonio Montalb\'an}
\address{Department of Mathematics, University of California, Berkeley}
\email{antonio@math.berkeley.edu}
\author{Dino Rossegger}
\address{Department of Mathematics, University of California, Berkeley {\normalfont and} Institute of Discrete Mathematics and Geometry, Technische Universit\"at Wien}
\email{dino@math.berkeley.edu}
\subjclass{03E15, 03C62, 03H15}
\thanks{
The first author was supported by NSF grant DMS-1363310.
The work of the second author was supported by the European Union's Horizon 2020 Research and Innovation Programme under the Marie Sk\l{}odowska-Curie grant agreement No. 101026834 — ACOSE}
\begin{document}
\begin{abstract}
  We calculate the possible Scott ranks of countable models of Peano arithmetic. We show
  that no non-standard model can have Scott rank less than $\omega$ and that
  non-standard models of true arithmetic must have Scott rank greater than
  $\omega$. Other than
  that there are no restrictions. By giving a reduction via $\Dinf{1}$
  bi-interpretability from the class of linear orderings to the canonical
  structural $\omega$-jump of models of an arbitrary completion $T$ of $\PA$ we
  show that every countable ordinal $\alpha>\omega$ is realized as the Scott rank of
  a model of $T$.
\end{abstract}
  \maketitle
The structural complexity of countable structures has been an active and deep
area of research at the intersection of model theory, descriptive set theory
and computability theory for more than half a century. One of the early results
responsible for the interest in this area is Scott's theorem~\cite{scott1963}
that every countable structure has a sentence defining it up to isomorphism
among countable structures in the infinitary logic $L_{\omega_1\omega}$.
Combining this with Vaught's work~\cite{vaught1974} on the Lopez-Escobar
theorem~\cite{lopez-escobar1969} one gets a connection with descriptive set
theory. Vaught showed that the sets of models of 
$\Pinf{\alpha}$ formulas are precisely the $\pmb\Pi^0_\alpha$
isomorphism-invariant sets in the Borel hierarchy on the space of structures. Thus, not only is
every isomorphism class of a countable structure Borel, but calculating the quantifier complexity of
a structures Scott sentence gives a measure of the complexity of its
isomorphism class in the Borel
hierarchy. 

Computability theorists use another approach to 
measure the complexity of countable structures. Say a structure $\A$ is \emph{uniformly $\pmb
\Delta^0_\alpha$-categorical} if there is a Turing operator $\Phi$ and an oracle
$X\subseteq \omega$ such that for all $\B,\C$ isomorphic to $\A$, $\Phi^{(X\oplus
\B\oplus \C)^{(\alpha)}}$ is an isomorphism between $\B$ and
$\C$. It follows from results of Ash~\cite{ash1986} that a structure is uniformly $\pmb\Delta^0_\alpha$-categorical if
and only if its automorphism orbits are definable by $\Sinf{\alpha}$ formulas.
The usual proof of Scott's theorem builds a Scott sentence for a structure out of the defining
formulas of its automorphism orbits. This shows that such a structure
must have a $\Pinf{\alpha+1}$ Scott sentence. 
On the other hand, Montalb\'an~\cite{montalban2015}
showed that a structure with a $\Pinf{\alpha+1}$ Scott sentence must have all automorphism
orbits $\Sinf{\alpha}$ definable. This unifies the two approaches and gives
a robust notion of Scott rank.
  \begin{theorem}[\cite{montalban2015}]\label{thm:robusterSR}
The following are equivalent for countable $\A$ and $\alpha<\omega_1$. 
\begin{enumerate}
\tightlist
\item  Every automorphism orbit of $\A$ is $\Sinf{\alpha}$-definable without parameters.
\item  $\A$ has a $\Pinf{\alpha+1}$ Scott sentence.
\item  $\A$ is uniformly $\pmb \Delta^0_\alpha$-categorical.
\item  The set of presentations of $\A$ is $\pmb \Pi^0_{\alpha+1}$ in the Borel hierarchy.
\item\label{it:alphafree}  No tuple in $\A$ is $\alpha$-free.
\end{enumerate}
The least $\alpha$ satisfying the above is the \emph{(parameterless) Scott rank} of $\A$.
\end{theorem}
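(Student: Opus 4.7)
My plan is to split the five-way equivalence into three parts, two of which are essentially classical. First, (2) $\Leftrightarrow$ (4) is the Vaught--L\'opez-Escobar theorem already invoked in the introduction: a Borel isomorphism-invariant class of countable structures is $\pmb\Pi^0_{\alpha+1}$ if and only if it is axiomatized by a $\Pinf{\alpha+1}$ sentence. Second, (1) $\Leftrightarrow$ (3) is Ash's metatheorem relating $\Sinf{\alpha}$-definability of orbits to the existence of a uniform back-and-forth procedure computable relative to the $\alpha$th jump. I would quote both rather than reprove them, reducing to the equivalence of (1), (2), and (5).

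For (1) $\Rightarrow$ (2) I would adapt Scott's original construction. Given the $\Sinf{\alpha}$ orbit-defining formulas $\phi_{\bar{a}}$, the sentence
\[
\phi_\emptyset \;\wedge\; \bigwwedge_{\bar{a}}\forall \bar{x}\,\Bigl(\phi_{\bar{a}}(\bar{x}) \to \forall y \bigvvee_{b}\phi_{\bar{a}b}(\bar{x}y) \;\wedge\; \bigwwedge_{b}\exists y\,\phi_{\bar{a}b}(\bar{x}y)\Bigr)
\]
has complexity $\Pinf{\alpha+1}$, since the antecedent of each implication is $\Sinf{\alpha}$ and the consequent is $\Pinf{\alpha+1}$; a standard back-and-forth argument using the orbit formulas as witnesses of partial isomorphism then shows that this is a Scott sentence for $\A$.

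The heart of the proof is the equivalence (2) $\Leftrightarrow$ (5) $\Leftrightarrow$ (1), mediated by the notion of an $\alpha$-free tuple. For (2) $\Rightarrow$ (5) I would argue contrapositively: an $\alpha$-free tuple $\bar{a}$ can be used in an iterated back-and-forth to build a structure $\B$ that satisfies the same $\Pinf{\alpha+1}$ sentences as $\A$, and hence any purported Scott sentence, while placing an image of $\bar{a}$ into a different automorphism orbit, so $\B\not\cong \A$. For (5) $\Rightarrow$ (1) I would show that for each tuple $\bar{a}$ the failure of $\alpha$-freeness produces witnesses that collapse the $\Sinf{\alpha}$-type of $\bar{a}$ to a single $\Sinf{\alpha}$ formula isolating its orbit; the non-$\alpha$-freeness supplies exactly the combinatorial input needed for this reduction.

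The main obstacle I anticipate is the back-and-forth of (2) $\Rightarrow$ (5): one must construct a structure that agrees with $\A$ at the $\Pinf{\alpha+1}$ level everywhere while diagonalizing against every automorphism on the orbit of a chosen tuple. Ensuring that the construction does not inadvertently collapse into an actual isomorphism is where the precise formulation of $\alpha$-freeness must be carefully exploited, and where I would expect the bulk of the technical work to lie.
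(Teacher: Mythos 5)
You should first note that the paper does not prove this theorem at all: it is quoted verbatim from \cite{montalban2015} as background, so there is no in-paper argument to compare against and your proposal has to be measured against the published proof. At that level, your treatment of the ``easy'' arcs matches the standard architecture: (2)$\Leftrightarrow$(4) is exactly L\'opez-Escobar/Vaught, (1)$\Leftrightarrow$(3) is the (relativized, boldface) Ash/Ash--Knight--Manasse--Slaman machinery, and your Scott sentence for (1)$\Rightarrow$(2) is the right one with the right complexity count (the implication $\Sinf{\alpha}\to\Pinf{\alpha+1}$ stays $\Pinf{\alpha+1}$ because these classes are closed under finite disjunction, and coverage of all tuples is secured inductively by the $\forall y\bigvvee_b$ conjuncts starting from $\phi_\emptyset$).

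The gap is in the arc you yourself identify as the heart, (2)$\Rightarrow$(5), and to a lesser extent (5)$\Rightarrow$(1). For (2)$\Rightarrow$(5): ``placing an image of $\bar a$ into a different automorphism orbit, so $\B\not\cong\A$'' does not certify non-isomorphism --- if $\B$ happens to be abstractly isomorphic to $\A$, the designated tuple simply lands in \emph{some} orbit of $\A$ and nothing is contradicted; and one cannot literally build a $\B\not\cong\A$ ``satisfying any purported Scott sentence,'' since satisfying a Scott sentence of $\A$ is by definition equivalent to being isomorphic to $\A$. What is actually needed is a $\B$ with $\A\leq_{\alpha+1}\B$ and $\B\not\cong\A$, and the non-isomorphism must be secured by a mechanism you have not supplied (for instance, arranging that the designated tuple of $\B$ realizes a $\leq_\alpha$-configuration over its extensions that no tuple of $\A$ realizes). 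This is precisely where the published proof does something different: the hard return arc from the Scott sentence back to orbit definability is closed through conditions (4) and (3) --- L\'opez-Escobar plus a forcing/generic-presentation argument --- with $\alpha$-freeness entering through the tuple-wise lemma that the orbit of $\bar a$ is $\Sinf{\alpha}$-definable iff $\bar a$ is not $\alpha$-free (the present paper itself invokes this as \cite[Lemma II.65]{montalban2021}). For (5)$\Rightarrow$(1), note also that the formula extracted from non-$\alpha$-freeness of $\bar a$, namely $\exists\bar y\,\varphi(\bar x\bar y)$ with $\varphi$ the $\Pinf{\beta}$ formula defining $\{\bar c:\bar a\bar b\leq_\beta\bar c\}$, a priori defines a set squeezed between $\mathrm{orb}(\bar a)$ and $\{\bar a':\bar a\leq_\alpha\bar a'\}$; collapsing the latter onto the former is a genuine back-and-forth argument that uses non-freeness of extensions of $\bar a$ and not just of $\bar a$ itself, a step your sketch does not acknowledge.
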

We already gave a quick introduction to all of the notions appearing in
\cref{thm:robusterSR} except \cref{it:alphafree}. This is a combinatorial condition that
is useful in arguments. It is based on the back-and-forth relations. Let us
define these formally. Our
definitions follow Montalb\'an's upcoming book~\cite{montalban2021} and we refer
to it for a thorough treatment of the back-and-forth relations and other
notions used here.
\begin{definition}
  Given an ordinal $\alpha$, $\tau$-structures $\A$ and $\B$ and tuples $\bar
  a\in A^{<\omega}$, $\bar b\in B^{<\omega}$ define the \emph{$\leq_\alpha$
  back-and-forth relations } inductively as follows.
  \[ (\A,\bar a)\leq_\alpha (\B,\bar b)\LR \forall \beta<\alpha \forall \bar
    d\in B^{<\omega} \exists \bar c\in A^{<\omega} (\B,\bar b\bar d )\leq_\beta
  (\A,\bar a\bar c)\]
  For the base case let $(\A,\bar a)\leq_0(\B,\bar b)$ if $\bar a$ and $\bar
  b\restrict |\bar a|$ satisfy the same quantifier-free $\tau_{|\bar
  a|}$-formulas.
\end{definition}
Karp~\cite{karp1965b} showed that, for $\alpha>0$, $(\A,\bar a)\leq_\alpha (\B,\bar b)$ if and only if every $\Pinf{\alpha}$-formula that is true of $\abar$ in $\A$ is true true of $\bbar$ in $\B$.

In this article we will mostly look at the case in the above definition when
$\A=\B$. In this case we will write the shorthand $\bar a\leq_\alpha \bar b$
for $(\A,\bar a)\leq_\alpha (\A,\bar b)$.
\begin{definition}
A tuple $\bar a$ in ${\mathcal{A}}$ is \emph{$\alpha$-free} if
$$ \forall (\beta<\alpha) \forall \bar b \exists \bar a'\bar b' (\bar a\bar b \leq_\beta \bar a'\bar b'\land \bar a \not\leq_\alpha \bar a').$$
\end{definition}
One of the main questions about structural complexities is how complicated
structures in natural classes can be. More formally, we want to calculate the
possible Scott ranks in natural classes of structures. Makkai~\cite{makkai1981}
defined the following.
\begin{definition}
  Let $T$ be an $L_{\omega_1\omega}$-sentence. The \emph{Scott spectrum} of $T$
  is the set 
  \[ SS(T)=\{\alpha<\omega_1: \alpha\text{ is the Scott rank of a countable
  model of T}\}.\]
\end{definition}
While this definition is about models of $L_{\omega_1\omega}$-sentences we can
also use it for measuring the complexity of structures in elementary classes,
as every first-order theory can be viewed as a sentence of $L_{\omega_1\omega}$ by
taking the conjunction of all the sentences in the theory.

The purpose of this article is to study the complexity of models of Peano
arithmetic ($\PA$) where, as usual: $\PA$ is the theory consisting of the
axioms for discrete ordered semirings and the induction scheme.
An easy argument shows that the Scott rank of the standard model $\mathbb N$ is
$1$. We show that $\mathbb N$ is the only model of $\PA$ that has Scott rank $1$
and that all other models must have Scott rank at least $\omega$. In
particular, the non-standard prime models of $\PA$ have Scott rank $\omega$ and
non-homogeneous models of $\PA$ must have Scott rank greater than $\omega$. For
non-standard models of the theory of the natural numbers, true arithmetic, we
obtain a stronger lower bound. Every such model must have Scott rank greater
than $\omega$.
Giving a reduction from the class of linear orderings via $\Dinf{1}$ bi-interpretability
to the structural $\omega$-jump of models of an arbitrary completion of $\PA$ we obtain that every Scott rank greater than $\omega$ is realized by
a model of $\PA$ and thus the following.
\begin{theorem}\label{mainthm}
  Let $T$ be a completion of $\PA$.
  \begin{enumerate}
    \tightlist 
  \item If $T=Th(\mathbb N)$, then $SS(T)=\{1\}\cup \{\alpha<\omega_1: \alpha>
      \omega\}$.
    \item If $T\neq Th(\mathbb N)$, then $SS(T)=\{\alpha<\omega_1:
      \alpha>\omega\}$.
    \end{enumerate}
    Thus, $SS(\PA)=\{1\}\cup \{\alpha<\omega_1: \alpha\geq \omega\}$.
\end{theorem}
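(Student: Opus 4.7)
The theorem has two sides: Scott ranks that cannot occur, and Scott ranks that are realized. For the first side, $\mathbb N$ has Scott rank $1$ because each element is pinned down by the quantifier-free formula $x = S^n(0)$, so orbits are singletons and ``$T$ together with $\forall x \bigvvee_n (x = S^n(0))$'' is a $\Pinf{2}$ Scott sentence. To exclude all finite ranks above $1$ for any non-standard $\M \models \PA$, I would exhibit a non-standard element $a$ whose orbit is not $\Sinf{n}$-definable for any $n<\omega$; equivalently, by item~(\ref{it:alphafree}) of \cref{thm:robusterSR}, $a$ (or a tuple extending it) is $n$-free for every $n$. At each level, overspill in a countable model should supply $\bar a'$ with the same $\Sinf{n}$-type as the tuple but a different $\Sinf{n+1}$-type, exploiting that non-standard elements realize non-isolated types. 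For the strict inequality $\mathrm{SR}(\M) > \omega$ when $\M \models Th(\mathbb N)$ is non-standard, the same scheme is pushed one level using that types over $\emptyset$ in true arithmetic are arithmetic and uniformly definable across $n<\omega$, yielding $\omega$-freeness rather than just $n$-freeness.

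On the realization side, when $T \ne Th(\mathbb N)$ the prime model of $T$ is non-standard yet each element realizes an isolated type; the isolating first-order formula $\phi_a(x)$ has some finite quantifier complexity $n_a$, so every orbit is $\Sinf{n_a}$-definable, and the supremum of these gives Scott rank exactly $\omega$, placing $\omega$ in $SS(T)$. To realize every ordinal $\alpha>\omega$ (for any completion $T$), the plan is to construct, uniformly in a countable linear order $L$, a model $\M_L \models T$ such that $L$ and the structural $\omega$-jump $\M_L^{(\omega)}$ are $\Dinf{1}$-bi-interpretable. Choosing a countable Scott set that contains a code of $L$ and is the standard system of some model of $T$ furnishes $\M_L$; a coding predicate on non-standard elements arranged so that its decoding is readable from $\Pinf{n}$-types uniformly in $n$ furnishes the interpretation. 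The general bi-interpretation and structural-jump framework in \cite{montalban2021} then yields $\mathrm{SR}(\M_L) = \omega + \mathrm{SR}(L)$, and since every countable ordinal is the Scott rank of some linear order, every $\omega+\alpha>\omega$ appears in $SS(T)$.

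The main obstacle is calibrating the interpretation so that it is $\Dinf{1}$ against the $\omega$-jump precisely, rather than a higher jump. Natural codings inside models of $\PA$ rely on existential search over non-standard elements, which tends to inflate complexity by more than $\omega$; keeping the inflation to exactly $\omega$ requires that the decoding be uniformly computable from $\Pinf{n}$-types of parameters across $n<\omega$, so that the $\omega$-jump already names everything needed. Once this is secured, the rank arithmetic $\mathrm{SR}(\M_L) = \omega + \mathrm{SR}(L)$ should follow cleanly from the general jump inversion together with the $\Dinf{1}$-bi-interpretation; the technical heart of the argument is therefore the coding construction itself and the verification that $\M_L$ sits inside the prescribed completion $T$.
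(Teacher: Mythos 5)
Your overall architecture matches the paper's (lower bounds for finite ranks, $\omega$ realized by the non-standard prime model, and every $\alpha>\omega$ realized via a $\Dinf{1}$ bi-interpretation between linear orders and the structural $\omega$-jump), but both halves have genuine gaps. On the lower-bound side, your plan is to exhibit a \emph{single} non-standard element that is $n$-free for every $n<\omega$, ``exploiting that non-standard elements realize non-isolated types.'' This fails exactly in the critical case: in the prime model of a completion $T\neq Th(\mathbb N)$ every element (non-standard ones included) realizes an isolated type, so every orbit is a singleton definable by a first-order formula of some finite complexity, and no element of the kind you want exists --- yet this is the model whose Scott rank must be shown to be exactly $\omega$. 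The correct statement has the quantifiers the other way around: for each $n$ there is a (different) tuple that is $n$-free, and proving even that requires machinery you do not supply. The paper formalizes bounded back-and-forth relations $\leq_n^a$ inside $\PA$, verifies they compute the true relations (\cref{prop:formalbfcorrect}), uses overspill in the non-standard prime model to produce, for each $n$, distinct definable tuples with $\bar a_0\leq_n\bar a_1$ (\cref{thm:primesromega}), and then handles an arbitrary $\M$ by underspill and elementarity down to its prime submodel (\cref{thm:srlowerbound}). Your claim that the supremum of the orbit complexities in the prime model ``gives Scott rank exactly $\omega$'' is precisely the point that needs this argument; nothing in your sketch rules out that the supremum is finite.

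On the realization side, the proposed construction --- a model of $T$ whose standard system contains a code of $L$, plus a coding predicate on non-standard elements --- does not yield a bi-interpretation. By \cref{thm:autgroupsiso}, infinitary bi-interpretability forces the automorphism groups of $L$ and $\M_L$ to be isomorphic, and merely placing a code of $L$ in the standard system gives no control over $\mathrm{Aut}(\M_L)$; recursively saturated models with the same standard system, for instance, have wildly larger automorphism groups than a rigid $L$ would permit. The paper instead uses Gaifman's $\L$-canonical extensions built from a \emph{minimal} (unbounded, indiscernible) type $p(x)$: indiscernibility is what makes $\mathrm{Aut}(\N_\L)\cong\mathrm{Aut}(\L)$, makes the set of realizations of $p(x)$ order-isomorphic to $\L$, and makes every first-order type realized in $\N_\L$ $\Sinf{1}$-definable over $\L$ (via Skolem terms in the generators). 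Together with \cref{lem:typesandbf}, which identifies $\Pinf{\omega}$-types with first-order types in models of $\PA$, this is what calibrates the interpretation to land exactly at the $\omega$-jump --- the obstacle you correctly identify but do not overcome. Once that is in place, the rank arithmetic $SR(\N_\L)=\omega+SR(\L)$ via \cref{cor:srjump} and the realization of all ordinals as Scott ranks of linear orders proceed as you describe.
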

In \cref{sec:lowerbounds} we formalize the back-and-forth relations in $\PA$ to
obtain the aforementioned lower bounds for the Scott ranks of non-standard
models. In \cref{sec:loandpa} we analyze results by Gaifman~\cite{gaifman1976}
to recover a reduction from linear orderings to models of $\PA$. It turns out
that this reduction actually provides a reduction from linear orderings to the
structural $\omega$-jumps of models of $\PA$. This and the properties of the
structural $\alpha$-jump are reviewed in \cref{sec:alphajumps}. At last we
combine all of our results to obtain a proof of \cref{mainthm}.

\section{Back-and-forth relations and Peano arithmetic}\label{sec:lowerbounds}
Throughout this article we assume that $\M$ and $\N$ are non-standard models of
$PA$, and
$M$, and $N$ are their respective universes. 
We will write $\mathbb N$ for the set of standard natural numbers in a given model and $\dot m$ for the formal term
representing the natural number $m$ in $PA$. We can code bounded subsets of a model $\M$,
\emph{$\M$-finite sets}, using elements of $\M$. Informally we let $\dot 0$ be the
code for the empty set and given an $\M$-finite set $A$, $\sum_{a\in A} 2^a$ is
its code. Using this and Cantor's pairing function $\langle
x,y\rangle=\frac{1}{2} ((x+y)^2 +3x+y)$ we can code sequences $\bar a\in
M^{<\omega}$ by the $\M$-finite set $\{ \langle i, a_i\rangle:
i<|\bar a|\}$. The length of sequences and concatenation can be defined in the
obvious way. We refer the reader to \cite{simpson2009} for formal definitions.

Let $Tr_{\Delta^0_1}$ be the truth predicate for bounded formulas, i.e., a predicate
satisfying for all bounded formulas $\phi(\overline x)$
\[PA\vdash \forall \overline x\ (Tr_{\Delta^0_1}(\ulcorner
  \phi\urcorner,\overline x) \leftrightarrow
\phi(\overline x)).\]
We define a version of the standard asymmetric back-and-forth relations where we bound the length of the tuples involved. 
The idea is that we want to be able to talk about the back-and-forth relations within models of PA, and the problem is that we don't want to have to consider tuples of non-standard length. 
Inductively define {\em bounded asymmetric $n$-back-and-forth relations with bound $a$}, denoted $\leq_n^a$, for all $n\in\omega$ as follows:
\begin{align*}
  \bar u\leq^a_0 \bar v& \Leftrightarrow \forall (x\leq |\bar u|)(
  Tr_{\Delta^0_1}(x,\bar u) \to
  Tr_{\Delta^0_1}(x,\bar v))\\
  &\mathllap{\bar u \leq_{n+1}^a \bar v} \Leftrightarrow \forall
    \bar x\exists \bar y \Big(|\bar x|\leq a \to \bar v\bar x  \leq_n^{a}
      \bar u\bar y\Big)
\end{align*}
We view $\bar u\leq_n^a \bar v$ as a formula on three variables $\bar u$, $\bar
v$, and $a$ that refer to elements of the model, and an outside parameter
$n\in\omega$ that is just part of the notation. Note that the $a$ parameter in
the definition of $\leq_0^a$ is technically superfluous. We just use it to
emphasize that $\leq_0^a$ is a relation in the language of $\PA$.
Thanks to the availability of codes for strings we regard the above formulas as
ternary predicates that are false if for some elements $\bar u,\bar v,a\in M$,
$\bar u$ and $\bar v$
are not codes for sequences. Defined like this, the
bounded back-and-forth relations behave as expected.
\begin{proposition}\label{prop:bfproperties}
  The bounded back-and-forth relations $\leq_n^x$ satisfy the following
  properties for all $n\in\om$:
  \begin{enumerate}
    \tightlist
  \item\label{it:closeddownwards} $PA\vdash\forall  \bar u, \bar v, a,
    b (( a\leq  b \land 
    \bar u\leq_n^b \bar v )\to  \bar u\leq_n^a \bar v)$
  \item\label{it:nested} $PA\vdash \forall  \bar u, \bar v, a ( 
    \bar u\leq_{n+1}^a \bar v \to  \bar u \leq_{n}^a  \bar v)$ 
\end{enumerate}
\end{proposition}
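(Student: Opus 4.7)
The plan is to prove each clause by induction on the external parameter $n \in \omega$. Since $n$ is standard and merely indexes a family of $\PA$-formulas rather than being a variable inside $\PA$, each instance is a separate universal theorem, and the induction is carried out in the metatheory; each step is a direct calculation inside $\PA$ from the defining formulas.

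For (1), the base case $n=0$ is immediate: as the authors already note, the right-hand side of the definition of $\leq_0^a$ does not mention $a$, so $\leq_0^b$ and $\leq_0^a$ are literally the same relation. For the inductive step I would fix $a\leq b$, assume $\bar u\leq_{n+1}^b \bar v$, and take an arbitrary $\bar x$ with $|\bar x|\leq a$. Since $a\leq b$ we also have $|\bar x|\leq b$, so the hypothesis yields $\bar y$ with $\bar v\bar x\leq_n^b\bar u\bar y$. The inductive hypothesis applied to the same pair $(a,b)$ converts this into $\bar v\bar x\leq_n^a\bar u\bar y$, and quantifying back over $\bar x,\bar y$ produces $\bar u\leq_{n+1}^a\bar v$.

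For (2), the inductive step is equally routine: from $\bar u\leq_{n+2}^a\bar v$ and $|\bar x|\leq a$ one gets $\bar y$ with $\bar v\bar x\leq_{n+1}^a\bar u\bar y$, and the inductive hypothesis (clause (2) at $n$) collapses this to $\bar v\bar x\leq_n^a\bar u\bar y$, yielding $\bar u\leq_{n+1}^a\bar v$. The base case $n=0$, where one must derive $\bar u\leq_0^a\bar v$ from $\bar u\leq_1^a\bar v$, carries the only real content. Here I would instantiate the $\leq_1^a$ hypothesis with the empty tuple (whose length is $0\leq a$ in $\PA$) to extract some $\bar y$ with $\bar v\leq_0^a\bar u\bar y$, and then argue that this $\Delta^0_1$-preservation from $\bar v$ to $\bar u\bar y$ is enough to recover the desired preservation from $\bar u$ to $\bar v$, using that bounded formulas are closed under negation and that $Tr_{\Delta^0_1}(x,\bar w)$ depends only on the free variables of the formula coded by $x$ (so appending $\bar y$ to $\bar u$ does not alter the truth value of a formula on $\bar u$).

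The main obstacle will be this last step: one has to flip the direction of the one-way preservation relation $\leq_0^a$ by exploiting negation of $\Delta^0_1$ formulas, and be careful that the coding and the bound ``$x\leq|\bar u|$'' used to define $\leq_0^a$ make this flipping go through inside $\PA$. Once the relevant closure property of $Tr_{\Delta^0_1}$ under Boolean combinations and tuple extension is recorded, the base case becomes a short contradiction argument, and the higher-$n$ clauses follow from the transcriptions above of the classical nesting and monotonicity proofs.
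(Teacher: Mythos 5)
Your proof of clause (1) is exactly the paper's: external induction on $n$, a trivial base case, and an inductive step that weakens $|\bar x|\leq b$ to $|\bar x|\leq a$ and then applies the induction hypothesis to the inner relation $\leq_n^b$. For clause (2) the paper gives no argument at all (``follows by an easy induction on the definition''), and your inductive step is the standard transcription the authors surely intend. Where you go beyond the paper is the base case of (2), and you have correctly identified it as the only step with content: since $\leq_0^a$ is a one-directional preservation statement, passing from $\bar v\leq_0^a\bar u\bar y$ (obtained by instantiating $\leq_1^a$ at the empty tuple) to the desired $\bar u\leq_0^a\bar v$ requires reversing the direction via closure of bounded formulas under negation. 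One caution: with the definition read literally, the quantifier in $\leq_0^a$ ranges over codes $x\leq|\bar u|$, the code of $\neg\phi$ exceeds that of $\phi$ under any reasonable G\"odel numbering, and nothing in the statement relates $|\bar u|$ to $|\bar v|$; so the flip as you describe it does not close unless the bound $x\leq|\bar u|$ is understood as a constraint on the free variables of the coded formula (equivalently, unless the set of codes considered is arranged to be closed under negation). This is a defect of the paper's informal definition rather than of your argument --- the authors' own omitted proof of (2) needs the same repair --- but you should fix the coding convention explicitly before relying on it.
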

\begin{proof}
  \cref{it:closeddownwards} can be shown using induction on $n\in\om$. 
The case $n=0$ follows easily. 
For the $n+1$ case, take $a\leq b$, and $\bar u$ and $\bar v$ in the model. 
Then
\begin{eqnarray*}
\bar u\leq_{n+1}^b \bar v		&\iff  &  \forall    \bar x\exists \bar y \Big(|\bar x|\leq b \to \bar v\bar x  \leq_n^{b}      \bar u\bar y\Big)   	\\
			&\implies &		\forall    \bar x\exists \bar y \Big(|\bar x|\leq a \to \bar v\bar x  \leq_n^{b}      \bar u\bar y\Big)			\\
			&\implies &		\forall    \bar x\exists \bar y \Big(|\bar x|\leq a \to \bar v\bar x  \leq_n^{a}      \bar u\bar y\Big)	\iff  \bar u\leq_{n+1}^a \bar v,	
\end{eqnarray*}  
where the second line uses that $|\bar x|\leq a\implies |\bar x|\leq b$, and the third line uses the induction hypothesis.

\cref{it:nested} follows by an easy induction on the definition. 
\end{proof}

\begin{proposition}\label{prop:formalbfcorrect}
  Let $\overline a, \overline b\in M$. Then 
  \[ \overline a\leq_n \overline b \Leftrightarrow (\forall m\in\omega) \M\models
    \overline a\leq_n^{\dot m }\overline b.\]
  Furthermore, if there is $c\in M\setminus \mathbb N$ such that $\M\models
  \overline a\leq_n^c\overline b$, then $\overline a\leq_n\overline b$.
\end{proposition}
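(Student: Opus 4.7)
The plan is to prove the biconditional by induction on $n$, with the ``furthermore'' clause dropping out along the way. For any non-standard $c$ we have $\dot m \leq c$ for all $m \in \omega$, and the downward-closure part of \cref{prop:bfproperties} then converts $\M \models \bar a \leq_n^c \bar b$ into $\M \models \bar a \leq_n^{\dot m} \bar b$ for every standard $m$, which by the biconditional yields $\bar a \leq_n \bar b$. The base case $n=0$ is a routine unpacking: the formal $\leq_0^a$ is a one-sided $\Delta^0_1$-condition, which is symmetric since negations of $\Delta^0_1$ formulas are $\Delta^0_1$, and in models of $\PA$ this coincides with the external quantifier-free $\leq_0$.

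For the forward direction of the inductive step, assume $\bar a \leq_{n+1} \bar b$ externally and fix $m \in \omega$. Any $\bar x \in M$ with $|\bar x| \leq \dot m$ codes a tuple of standard length at most $m$, so the external definition of $\leq_{n+1}$ produces an external $\bar y \in M^{<\omega}$ with $\bar b \bar x \leq_n \bar a \bar y$; the induction hypothesis upgrades this to $\M \models \bar b \bar x \leq_n^{\dot m} \bar a \bar y$, witnessing $\M \models \bar a \leq_{n+1}^{\dot m} \bar b$.

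The reverse direction is where \emph{overspill} enters, and this is the crux of the argument. Assume $\M \models \bar a \leq_{n+1}^{\dot m} \bar b$ for every $m \in \omega$. The set $\{x \in M : \M \models \bar a \leq_{n+1}^x \bar b\}$ is definable in $\M$ with parameters $\bar a, \bar b$ and contains every standard natural, so by overspill it contains some non-standard $c \in M \setminus \mathbb N$. Given any external tuple $\bar d \in M^{<\omega}$, we have $|\bar d| \leq c$ in $\M$, so unpacking $\M \models \bar a \leq_{n+1}^c \bar b$ yields $\bar y \in M$ with $\M \models \bar b \bar d \leq_n^c \bar a \bar y$. The ``furthermore'' clause at stage $n$ (derived from the first paragraph, applied inside the induction) then converts this into $\bar b \bar d \leq_n \bar a \bar y$ externally, and since $\bar d$ was arbitrary, $\bar a \leq_{n+1} \bar b$. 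The main obstacle is precisely this last move: from standard-bounded data one needs to extract a single external existential witness, and overspill is the bridge that purchases a non-standard bound large enough to absorb any external tuple we subsequently need to handle.
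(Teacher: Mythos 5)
Your proof is correct and follows essentially the same route as the paper's: induction on $n$, with the forward direction mirroring the monotonicity argument of \cref{prop:bfproperties} and the reverse direction obtained by overspilling the definable set of bounds and then descending from a non-standard bound to the external relation. The only (cosmetic) difference is that you derive the ``furthermore'' clause at each level from the biconditional via downward closure in the bound, whereas the paper proves it by its own direct induction and then deduces the reverse implication from it.
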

\begin{proof}
That $\overline a\leq_n \overline b \implies  \M\models   \overline a\leq_n^{\dot m }\overline b$ for $m\in\om$ follows by the same argument as in the previous proposition using $|\bar x|< \omega$ instead of $|\bar x|\leq b$.

That $\M\models  \overline a\leq_n^c\overline b\implies \overline a\leq_n\overline b$ for $c\in M\setminus \mathbb N$ also follows by the same argument as in the previous proposition now using $|\bar x|< c$ instead of $|\bar x|\leq b$ and $|\bar x|< \omega$ instead of $|\bar x|\leq a$.

Finally, suppose that $ (\forall m\in\omega) \M\models    \overline a\leq_n^{\dot m }\overline b$.
The set of all $m\in M$ for which $\M\models\overline a\leq_n^{m }\overline b$ is definable in $\M$ (by a $\forall_{2n}$ first-order formula), and it contains all $m\in\om$.
Since $M$ satisfies induction, this set must overspill and contain some $c\in M\setminus \mathbb N$.
So we have $\overline a\leq_n^{c}\overline b$.
It follows from the previous paragraph that we then have $\overline a\leq_n \overline b$.
\end{proof}
\begin{lemma}\label{lem:typesandbf}
   For every $\ol a, \ol b\in M^{<\omega}$, $\ol a\leq_\omega\ol b$ if and only
   if $tp(\ol a)=tp(\ol b)$.
\end{lemma}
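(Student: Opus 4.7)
The plan is to prove both directions via \cref{prop:formalbfcorrect}, using overspill in $\M$ together with $PA$-provable properties of the formal bounded back-and-forth.

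For $(\Rightarrow)$: Suppose $\bar a\leq_\omega \bar b$. By \cref{prop:formalbfcorrect}, $\M\models \bar a\leq_n^{\dot m}\bar b$ for every $n,m\in\omega$. The set $\{k\in M:\forall n,m\leq k\,(\bar a\leq_n^m\bar b)\}$ is definable in $\M$ and contains every standard integer, so by overspill it contains some nonstandard $c\in M\setminus\mathbb N$; in particular $\M\models \bar a\leq_c^c\bar b$. The key step is then to verify that $PA$ proves, by internal induction on $n$, that $\bar u\leq_n^a\bar v$ implies $\phi(\bar u)\to\phi(\bar v)$ for every first-order $\phi$ whose G\"odel number is at most $a$ and whose quantifier rank is at most $n$, with the direction dictated by the parity of $n$ in the asymmetric definition. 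Since $c$ is nonstandard it exceeds both the G\"odel number and the quantifier rank of every standard formula, so applying this to $\phi$ and to $\neg\phi$ in turn yields $\M\models \phi(\bar a)\leftrightarrow\phi(\bar b)$ for every standard first-order $\phi$; hence $tp(\bar a)=tp(\bar b)$.

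For $(\Leftarrow)$: Suppose $tp(\bar a)=tp(\bar b)$. By \cref{prop:formalbfcorrect} it suffices to show $\M\models \bar a\leq_n^{\dot m}\bar b$ for every $n,m\in\omega$. I would proceed by external induction on $n$, establishing the stronger schema: if $\bar u$ and $\bar v$ agree on all first-order formulas of suitably bounded complexity (in particular on the $\Sigma_n$-formulas, witnessable via the partial truth predicate $T_{\Sigma_n}$) then $\M\models \bar u\leq_n^a\bar v$ for every $a$. In the inductive step, given $\bar x$ with $|\bar x|\leq a$, we must produce $\bar y$ such that $\bar v\bar x$ and $\bar u\bar y$ agree on $\Sigma_n$-formulas; but the existence of such a $\bar y$ over $\bar u$ is itself a $\Sigma_{n+1}$-property of $\bar u$ (formulated using $T_{\Sigma_n}$), and it is witnessed over $\bar v$ by $\bar x$, so the $\Sigma_{n+1}$-agreement hypothesis on $\bar u,\bar v$ transfers it to $\bar u$. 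Applying this with $\bar u=\bar a$ and $\bar v=\bar b$ gives $\bar a\leq_n\bar b$ for every $n$, hence $\bar a\leq_\omega \bar b$.

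The hardest part will be carefully formalizing inside $PA$ the equivalence between $\leq_n^a$ and preservation of formulas of bounded complexity through the partial truth predicates, together with managing the parity-dependent alternation coming from the asymmetric definition of the bounded back-and-forth; once that is in place, both directions reduce to a clean overspill/compactness argument at a nonstandard level.
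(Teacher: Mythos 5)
Both directions of your proposal contain genuine gaps, and in both cases the gap is of the same nature: you try to run the argument entirely inside $\M$, where the needed objects are not available. In the forward direction, the set $\{k\in M:\forall n,m\leq k\,(\bar a\leq_n^m\bar b)\}$ is not definable in $\M$: the subscript $n$ in $\leq_n^a$ is an \emph{external} index of a family of formulas whose quantifier complexity grows with $n$ (the paper is explicit that $n$ ``is just part of the notation''), so the quantifier $\forall n\leq k$ cannot be internalized without a truth predicate for formulas of unbounded complexity, and the expression $\bar a\leq_c^c\bar b$ for nonstandard $c$ is simply not defined. Overspill can only be applied for each fixed standard $n$ separately, which is exactly how \cref{prop:formalbfcorrect} uses it. Fortunately this direction needs none of this machinery: $\bar a\leq_\omega\bar b$ implies, by Karp's theorem, that every $\Pinf{\omega}$ formula true of $\bar a$ is true of $\bar b$, and every first-order formula and its negation are (equivalent to) $\Pinf{\omega}$ formulas, so $tp(\bar a)=tp(\bar b)$ follows at once.

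The backward direction has the more serious gap, in the transfer step. The property you want to move from $\bar v$ to $\bar u$ --- ``there exists $\bar y$ such that $\bar u\bar y$ and $\bar v\bar x$ agree on $\Sigma_n$-formulas'' --- is not a parameter-free property of $\bar u$: it mentions $\bar v$ and $\bar x$. The hypothesis $tp(\bar u)=tp(\bar v)$ (or agreement on $\Sigma_{n+1}$-formulas) transfers only \emph{parameter-free} formulas, so it cannot carry this property from $\bar v$ (where it is witnessed by $\bar y=\bar x$) over to $\bar u$; more generally, whether $\M\models\bar u\leq_{n+1}^{\dot m}\bar v$ is a property of the \emph{joint} type of $\bar u\bar v$, and no amount of agreement between the individual types of $\bar u$ and $\bar v$ yields it by formula transfer inside $\M$. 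The witnesses $\bar y$ you need may genuinely fail to exist by any such internal argument; they are supplied externally. The paper's proof does exactly this: since $tp(\bar a)=tp(\bar b)$, there is an elementary extension $\N\succ\M$ with an automorphism carrying $\bar a$ to $\bar b$, so $(\N,\bar a)\leq_\omega(\N,\bar b)$ and hence $\N\models\bar a\leq_n^{\dot m}\bar b$ for all $n,m$ by the easy direction of \cref{prop:formalbfcorrect}; because each $\bar a\leq_n^{\dot m}\bar b$ is a single first-order formula with standard parameters, it pulls back to $\M$ by elementarity, and \cref{prop:formalbfcorrect} applied in $\M$ then gives $\bar a\leq_n\bar b$ for all $n$. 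The elementary extension is the step your proposal is missing and cannot avoid.
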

\begin{proof}
  The left to right direction follows since $\ol a\leq_\omega\ol
  b$ implies that every $\Pinf{\omega}$ sentence true of $\ol a$ is true of
  $\ol b$. 
  To see the other direction assume that $tp(\ol a)=tp(\ol b)$.
  Then there is $\N\succ \M$ and $\sigma$ an automorphism of $\N$ with
  $\sigma(\ol a)=\ol b$, so in particular $(\N,\ol a)\leq_\omega (\N,\ol b)$.
  For every $n,m\in\omega$ we have that
  $\N\models \ol a \leq_n^{\dot m} \ol b$
  and thus also $\M\models \ol a \leq_n^{\dot m}\ol b$. 
  Thus, by \cref{prop:formalbfcorrect}, for every $n\in\omega$, $(\M,\ol a) \leq_n (\M,\ol b)$ and $(\M,\ol a)\leq_\omega(\M,\ol b)$ as required.
\end{proof}
\subsection{Homogeneous models}
Recall that a model $\M$ is \emph{homogeneous} if every partial elementary map
$M\to M$ extends to an automorphism. This implies that the automorphism orbits
of elements in homogeneous models are equal to their types. Some of the best
known examples of homogeneous models are atomic and recursively saturated
models.

\cref{lem:typesandbf} already implies that models of $PA$ that are not
homogeneous must have Scott rank larger than $\omega$. To see this recall  
that if $\M$ is not homogeneous, then there are $\ol a,\ol b\in M^{<\omega}$ such
that $tp(\ol a)=tp(\ol b)$ and $\ol a$ and $\ol b$ are not automorphic. 
If $SR(\M)\leq\omega$, then the automorphism orbit of $\ol b$ is
$\Sigma^{\mathrm{in}}_\omega$ definable and thus $\ol a\not\leq_\omega \ol b$,
a contradiction with \cref{lem:typesandbf}.
\begin{lemma}\label{lem:lowerboundnonhom}
  If $\M$ is not homogeneous, then $SR(\M)>\omega$.
\end{lemma}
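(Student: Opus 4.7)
The plan is to argue by contradiction, exactly along the lines sketched in the paragraph immediately preceding the lemma. Suppose $\M$ is not homogeneous, and suppose toward a contradiction that $SR(\M) \leq \omega$. The goal is to produce tuples $\ol a, \ol b$ such that $\ol a \leq_\omega \ol b$ (forcing them into the same orbit via the Scott rank hypothesis) while being guaranteed that they are \emph{not} in the same orbit, contradicting Theorem~\ref{thm:robusterSR}.

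First I would invoke the hypothesis of non-homogeneity: since every partial elementary map of $\M$ does not extend to an automorphism, there must exist tuples $\ol a, \ol b \in M^{<\omega}$ such that $tp(\ol a) = tp(\ol b)$ but no automorphism of $\M$ sends $\ol a$ to $\ol b$, i.e., $\ol a$ and $\ol b$ lie in distinct automorphism orbits. Next, I would apply \cref{lem:typesandbf}: the equality of types gives $\ol a \leq_\omega \ol b$.

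Now I would use the assumption $SR(\M) \leq \omega$ together with clause (1) of \cref{thm:robusterSR}: every automorphism orbit of $\M$ is $\Sinf{\omega}$-definable without parameters. Let $\phi(\ol x)$ be a $\Sinf{\omega}$ formula defining the automorphism orbit of $\ol b$, so that $\M \models \phi(\ol b)$ while $\M \not\models \phi(\ol a)$ (since $\ol a$ is not in this orbit). But from $\ol a \leq_\omega \ol b$ and Karp's characterization (stated in the excerpt after the definition of the back-and-forth relations), every $\Sinf{\omega}$ formula satisfied by $\ol b$ is also satisfied by $\ol a$; in particular $\M \models \phi(\ol a)$, which is the desired contradiction.

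There is no real obstacle here; the work has essentially been front-loaded into \cref{lem:typesandbf}, which already bridges types and back-and-forth equivalence at level $\omega$. The only subtle point is correctly aligning the direction of the back-and-forth relation with the fact that $\Sinf{\omega}$-definability of orbits is preserved downward along $\leq_\omega$, but this is immediate from Karp's theorem as cited in the paper.
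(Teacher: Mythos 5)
Your argument is correct and is essentially identical to the paper's proof, which appears in the paragraph immediately preceding the lemma: extract non-automorphic tuples of the same type from non-homogeneity, apply \cref{lem:typesandbf} to get $\leq_\omega$-equivalence, and derive a contradiction with the $\Sinf{\omega}$-definability of orbits guaranteed by $SR(\M)\leq\omega$. The handling of the direction of $\leq_\omega$ via Karp's theorem is also fine (and in any case both directions hold here since the types are equal).
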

Given a formula $\psi(y_1,\dots,y_n)=\exists x \phi(x,y_1,\dots,y_n)$, a model $\M$ and 
$a_1,\dots, a_n\in M$ recall that a Skolem term $s_\phi(a_1,\dots,a_n)$ is the
least element $b\in M$ satisfying $\phi(b,a_1,\dots, a_n)$. If $\M$ is a model
of $\PA$, then $b$ is uniquely determined if
$\M\models \psi(a_1,\dots a_n)$. If $\M\not\models \psi(a_1,\dots,a_n)$ we use
the convention that $b=0$.
In the special case where $\psi$ is parameter-free we refer to $b$ as Skolem
constant and denote it by $m_\phi$.
Consider the subset 
\[ 
N=\{ m_\phi:\phi \text{ an }L\text{-formula} \}.
\]
One can prove, using the Tarski-Vaught test, that $\N$ is an elementary substructure of $\M$. 
Furthermore $\N$ is unique up to isomorphism among models of $T=Th(\M)$: 
This is because for all formulas $\psi(y_1,...,y_k),
\varphi_1(x),...,\varphi_k(x)$, we have that $T$ decides whether
$\psi(m_{\varphi_1},...,m_{\varphi_k})$ holds or not.
Since $\N$ is a sub-model of all models of $T$,  it is the prime model of $T$. 
Furthermore, for any $\M\succeq \N$ if $n\in N$, then $aut_\M(n)$ is
a singleton as every element of $\N$ is definable in $\M$.

\begin{theorem}\label{thm:primesromega}
  Let $\N$ be a non-standard prime model of $PA$, then $SR(\N)=\omega$.
\end{theorem}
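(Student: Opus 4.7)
The plan is to establish $SR(\N)\leq\omega$ and $SR(\N)\geq\omega$ separately.

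For the upper bound I would invoke condition (1) of \cref{thm:robusterSR}: it suffices to show that every automorphism orbit of $\N$ is $\Sinf{\omega}$-definable without parameters. By the construction of the prime model described immediately before the theorem, every element of $\N$ is a Skolem constant $m_\phi$ for some $L$-formula $\phi$, and its automorphism orbit is the singleton $\{m_\phi\}$. This singleton is defined in $\N$ by the first-order formula $\phi^*(x):=\phi(x)\wedge\forall y<x\ \neg\phi(y)$. For a tuple $\bar a=(m_{\phi_1},\dots,m_{\phi_k})$ the orbit is likewise the singleton $\{\bar a\}$, defined by the first-order conjunction $\bigwedge_{i\leq k}\phi_i^*(x_i)$. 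Every first-order formula has finite quantifier complexity and so lies in $\Sinf{\omega}$, so every automorphism orbit of $\N$ is $\Sinf{\omega}$-definable without parameters, giving $SR(\N)\leq\omega$.

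For the lower bound, since $\N$ is non-standard, I would appeal to the claim from the introduction that every non-standard model of $\PA$ has Scott rank at least $\omega$. Alternatively, to verify \cref{it:alphafree} of \cref{thm:robusterSR} directly at each finite $n$, one would produce an $n$-free tuple: a natural candidate is a non-standard element $c\in N\setminus\mathbb N$, with the required $\leq_\beta$-witnesses obtained from the bounded back-and-forth relations $\leq_n^a$ of \cref{prop:bfproperties,prop:formalbfcorrect} together with an overspill argument inside $\N$, exploiting that non-standard elements realize types separated from any Skolem constant at arbitrarily high finite levels.

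The main obstacle is the lower bound. Because the prime model is automorphism-rigid and therefore trivially homogeneous, \cref{lem:lowerboundnonhom} does not yield $SR(\N)\geq\omega$, and a separate back-and-forth argument inside $\PA$ is required. Once both inequalities are in hand, $SR(\N)=\omega$ follows immediately.
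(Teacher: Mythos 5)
Your upper bound matches the paper's: $\N$ is atomic, every orbit is a singleton isolated by a first-order formula, and first-order formulas are $\Sinf{n}$ for some finite $n$, hence $\Sinf{\omega}$; so $SR(\N)\leq\omega$. The lower bound is where the gap is, and both routes you offer break down. The first is circular in the context of this paper: the claim that every non-standard model of $\PA$ has Scott rank at least $\omega$ is \cref{thm:srlowerbound}, and its proof concludes by deriving a contradiction with \cref{thm:primesromega}, so it cannot be invoked here. The second route has the right shape (exhibit, for each $n$, a witness that some orbit is not $\Sinf{n}$-definable), but your proposed witness --- a single fixed non-standard element $c\in N\setminus\mathbb N$ --- cannot work. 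In the prime model \emph{every} element, standard or not, is a Skolem constant $m_\phi$, definable by a first-order formula of some fixed finite quantifier complexity $k$; its orbit is therefore $\Sinf{k}$-definable and $c$ is not $n$-free for $n\geq k$. The witnesses must change with $n$, and the phrase ``types separated from any Skolem constant at arbitrarily high finite levels'' is vacuous for $\N$, since every element of $\N$ \emph{is} a Skolem constant.

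The missing idea is how to manufacture witnesses of unbounded complexity inside $\N$. The paper does this by going \emph{up} before coming back down: take a non-homogeneous elementary extension $\M\succ\N$. By \cref{lem:lowerboundnonhom}, $SR(\M)>\omega$, so for each $n$ there are distinct tuples with $\bar a_0\leq_n\bar a_1$ in $\M$; hence for every standard $m$, $\M\models\exists\bar x_0\bar x_1\,(\bar x_0\neq\bar x_1\land\bar x_0\leq_n^{\dot m}\bar x_1)$. This is a first-order sentence, so it transfers to $\N$ by elementarity. Overspill applied to the definable set $X_n=\{b\in N:\N\models\exists\bar x_0\bar x_1\,(\bar x_0\neq\bar x_1\land\bar x_0\leq_n^{b}\bar x_1)\}$ then produces a non-standard bound $b^*$ and distinct tuples $\bar a_0\leq_n^{b^*}\bar a_1$ in $N$, which by \cref{prop:formalbfcorrect} satisfy the genuine relation $\bar a_0\leq_n\bar a_1$. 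Since all orbits in $\N$ are singletons, the orbit of $\bar a_1$ is not $\Sinf{n}$-definable, so $SR(\N)>n$ for every $n<\omega$. Without this descent-plus-overspill step your lower bound does not go through.
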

\begin{proof}
  Every prime model is atomic and thus has Scott rank at most $\omega$, as the defining
  formulas of the automorphism orbits are given by the isolating formulas of
  the types. 
  To see that $SR(\N)\geq \omega$ consider any non-homogeneous model
  $\M\succ \N$. Then by \cref{lem:lowerboundnonhom}, $SR(\M)> \omega$. Thus,
  for every $n$ there are $\bar a_0, \bar a_1\in
  \M^{<\omega}$ such that, $\bar a_0\leq_n \bar a_1$ but $\bar
  a_0\not\in aut_\M(\bar a_1)$. Fix $n$. 
  For every $m\in \omega$, 
  \[
  \M\models\exists \bar x_0,\bar x_1\ \bar x_0\neq \bar x_1 \land \bar
  x_0\leq_n^{\dot m} \bar x_1
  \]
  and by elementarity $\N\models \exists \bar x_0,\bar x_1\ \bar x_0\neq \bar x_1 \land \bar
  x_0\leq_n^{\dot m} \bar x_1$.
  Consider the set
  \[X_n=\{ b\in \N : \N\models \exists \bar x_0\bar x_1\ \bar x_0\neq \bar x_1\
  \land  \bar x_0 \leq_n^b \bar x_1\}\]
  which is a definable subset of $\N$ containing all of $\mathbb N$. Therefore,
  as $\N$ is non-standard, it must overspill and contain an element $b^*\in
  \N\setminus\mathbb N$. 
  Consider $\bar  a_0,\bar a_1\in N^{<\omega}$ such that  $\bar a_0\leq^{b^*}_n \bar a_1$.
  Then, by \cref{prop:formalbfcorrect},   $\bar a_0\leq_n \bar a_1$ but $\bar a_0\neq \bar a_1$.
  Since all the elements of $\N$ are definable,  all automorphism orbits are singletons.
  It follows that the automorphism orbit of $a_1$ is not $\Sinf{n}$ definable (as every $\Sinf{n}$ formula true of $a_1$ is also true of $a_0$).
  Hence  $SR(\N)> n$.
Thus, $\N$ does not have Scott rank less than $\omega$.
\end{proof}

\begin{theorem}\label{thm:srlowerbound}
  Let $\M$ be a model of $\PA$ such that $\M\not\models Th(\mathbb N)$, then $SR(\M)\geq \omega$.
\end{theorem}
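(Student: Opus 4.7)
My plan is to apply \cref{thm:primesromega} to the prime model of $Th(\M)$ and transfer the resulting back-and-forth witnesses into $\M$, using the fact that prime-model elements are $0$-definable and hence fixed by every automorphism of $\M$.

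Let $T=Th(\M)$ and let $\N\preceq \M$ be the prime model of $T$, realized inside $\M$ as the set $\{m_\phi : \phi\text{ an }L\text{-formula}\}$ of Skolem constants. Since $T$ is a completion of $\PA$ different from $Th(\mathbb N)$, we cannot have $\N\cong \mathbb N$---otherwise $\mathbb N$ would be a model of $T$ and completeness would force $T=Th(\mathbb N)$---so $\N$ is non-standard. The overspill argument inside the proof of \cref{thm:primesromega} then produces, for each $n<\omega$, tuples $\bar a_0,\bar a_1 \in N^{<\omega}$ with $\bar a_0\neq\bar a_1$ and $\bar a_0\leq_n \bar a_1$ in $\N$.

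Next I would transfer this back-and-forth to $\M$ and exploit rigidity. Because $\leq_n^c$ is a first-order formula in the language of $\PA$, \cref{prop:formalbfcorrect} applied in $\N$ yields $\N\models \bar a_0\leq_n^{\dot m}\bar a_1$ for every $m\in\omega$; by elementarity of $\N\preceq\M$ the same statement holds in $\M$, and a second application of \cref{prop:formalbfcorrect} gives $\bar a_0\leq_n \bar a_1$ in $\M$. Each coordinate of $\bar a_0,\bar a_1$ is a Skolem constant $m_\phi$, which is $0$-definable in $T$, so every automorphism of $\M$ fixes these tuples pointwise; in particular $\bar a_0\notin aut_\M(\bar a_1)$. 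Any $\Sinf{n}$ formula true of $\bar a_1$ in $\M$ must therefore also be true of $\bar a_0$, so the orbit of $\bar a_1$ is not $\Sinf{n}$-definable, and \cref{thm:robusterSR} yields $SR(\M)>n$. Taking $n$ arbitrary gives $SR(\M)\geq\omega$.

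The main subtlety is that the back-and-forth game played in $\N$ is a priori distinct from the one played in $\M$, since enlarging the universe of the structure can give additional moves to the $\forall$-player. Passing through the first-order bounded relations $\leq_n^c$ via \cref{prop:formalbfcorrect} is what lets elementarity carry out the transfer cleanly, and the rigidity of $\N$ inside $\M$ is what ensures the witnessing pair remains in distinct orbits after transfer.
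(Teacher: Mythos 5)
Your proof is correct and rests on essentially the same ingredients as the paper's: reduction to the prime model $\N$ of $Th(\M)$ sitting inside $\M$ as the Skolem constants, the distinct $\leq_n$-related tuples produced by the overspill argument in the proof of \cref{thm:primesromega}, the formalized relations $\leq_n^{\dot m}$ together with \cref{prop:formalbfcorrect} and elementarity to move back-and-forth facts between $\N$ and $\M$, and the rigidity of the Skolem constants to keep the witnesses in distinct orbits. The only difference is organizational: the paper argues by contradiction, pulling the assumed $\Sinf{n}$-definability of orbits down into $\N$ via an underspill in $\M$, whereas you push the witnesses from $\N$ up into $\M$ directly --- both transfers being licensed by the same proposition.
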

\begin{proof}
 Let $\N$ be the elementary sub-model of $\M$ consisting of all Skolem
 constants in $\M$. In particular, $\N$ is the prime model of $Th(\M)$. 
 Towards a contradiction, suppose that  $SR(\M)=n<\omega$. 
 We then have that every automorphism orbit of $\M$ is $\Sinf{n}$ definable, and therefore whenever we have $\bar u \leq_n \bar a$, we have that $\bar u\in aut(\bar a)$.
 Fix $\bar a\in N^{<\omega}$.
 Since $\bar a$ is definable in $\M$, whenever we have $\bar u \leq_n \bar a$, we have  $\bar u=\bar a$.
 
 Consider the set
  \[ 
  X_{\abar}=\{ c \in M: \M\models \forall \bar
      u (\bar u \leq_n^c \bar a\to \bar a=\bar u)\}.
  \] 
This definable set contains all $c\in M\setminus \mathbb N$, and hence it must contain some $k\in\mathbb N$.
By elementarity, we get that $\N\models  \forall \bar u (\bar u \leq_n^{\dot{k}} \bar a\to \bar a=\bar u)$, and in particular whenever we have $(\N,\bar u) \leq_n (\N,\bar a)$, we have  $\bar u=\bar a$. 
 This is true for all $\bar a\in N^{<\omega}$, contradicting \cref{thm:primesromega}.
\end{proof}
For non-standard models of true arithmetic we get an even better lower bound.
\begin{theorem}\label{thm:srlowerboundta}
  Let $\M\models Th(\mathbb N)$ be non-standard. Then $SR(\M)>\omega$.
\end{theorem}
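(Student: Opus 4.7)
The plan is to argue by contradiction: assume $SR(\M)\leq\omega$. The first move is to invoke \cref{lem:lowerboundnonhom}, which forces $\M$ to be homogeneous (otherwise $SR(\M)>\omega$ already). Under homogeneity, automorphism orbits coincide with complete types, so for every tuple $\bar a\in M^{<\omega}$ there is some standard $n=n(\bar a)$ such that $\bar u\leq_n\bar a$ implies $tp(\bar u)=tp(\bar a)$.

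The natural next step is to adapt the overspill strategy from \cref{thm:srlowerbound}. The obstruction is that the prime elementary sub-model of $\M$ is now $\mathbb{N}$ itself, which is standard; applying the previous argument to $\bar a\in\mathbb{N}^{<\omega}$ and then transferring by elementarity only yields statements that are trivially true in $\mathbb{N}$. To circumvent this I would work with non-standard tuples $\bar a$, replacing the exact equality ``$\bar u=\bar a$'' by first-order approximations of type-equivalence given by the $\Sigma_k$-truth predicates (which are definable in $\M$ because $\M\models Th(\mathbb{N})$). For each standard $k$ the set of all $c$ with $\M\models\forall \bar u(\bar u\leq_{n(\bar a)}^c\bar a\to \bar u\equiv_k\bar a)$---where $\equiv_k$ denotes agreement on $\Sigma_k$-formulas---is definable and contains every non-standard $c$, so by overspill it contains some standard bound $\ell_k$.

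The heart of the argument would then be a second overspill, this time in the complexity parameter $k$: the predicate ``$\exists \ell\,\forall \bar u(\bar u\leq_{n(\bar a)}^\ell\bar a\to \bar u\equiv_k\bar a)$'' is first-order in $k$ and holds for every standard $k$, so overspill yields a non-standard $k^*$ with some witness $\ell^*$. Since $k^*$ is non-standard, $\equiv_{k^*}$ subsumes every externally-standard formula, so the relation captures full external type-equivalence; this collapses the description of the orbit of $\bar a$ to a first-order formula in parameters $\bar a,\ell^*,k^*$. Combining this with the elementarity $\mathbb{N}\prec \M$ (after extracting a parameter-free consequence) is intended to yield a contradiction with the non-principality of $tp(\bar a)$, since any first-order over-approximation of an orbit realizing a non-principal type should, via descent, force a realization of that type inside $\mathbb{N}$.

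The main obstacle will be executing the double overspill cleanly---in particular ensuring enough uniformity of the witnesses $\ell_k$ in $k$ for the outer overspill---and then extracting the final contradiction from a first-order description that still carries non-standard parameters. Should this direct route fall short, the backup plan is to construct an explicit $\omega$-free tuple in $\M$ via item~(\ref{it:alphafree}) of \cref{thm:robusterSR}, exploiting the fact that the types realized by non-standard elements in models of true arithmetic admit arbitrarily fine back-and-forth approximation by elements of distinct types.
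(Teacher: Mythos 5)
Your opening reduction (via \cref{lem:lowerboundnonhom} to the homogeneous case, where orbits coincide with types and each orbit is $\Sinf{n}$-definable for a single standard $n(\bar a)$) is sound, and your first underspill is essentially the manoeuvre from the paper's proof of \cref{thm:srlowerbound}. The argument breaks at the second, ``outer'' overspill on the complexity parameter $k$. For that step you need $\bar u\equiv_k\bar a$ to be given by a single first-order formula $\theta(k,\bar u,\bar a)$ correct for all standard $k$ simultaneously; by Tarski's undefinability theorem no such uniform satisfaction predicate exists (it would yield a truth definition for all of first-order arithmetic), so the predicate ``$\exists\ell\,\forall\bar u(\bar u\leq_{n(\bar a)}^{\ell}\bar a\to\bar u\equiv_k\bar a)$'' is not first-order in $k$ and cannot be overspilled. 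This is not a uniformity-of-witnesses issue you can engineer around: the non-uniformity lives in $\equiv_k$ itself. There is also a problem one level down. If $\equiv_k$ means agreement on all internally coded $\Sigma_k$-formulas (the only reading under which each set of bounds is definable by one formula), the claim that it contains every non-standard $c$ already fails: equality of external types does not force agreement on non-standard formula codes (two distinct elements of the same type always disagree on some internal bounded formula, e.g.\ one asserting $\dot c<v$ for a suitable non-standard $c$). If instead $\equiv_k$ means agreement on the standard $\Sigma_k$-formulas only, it is an infinite external conjunction and underspill does not apply.

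Even granting both overspills, the endgame is missing: you would obtain a first-order formula \emph{with parameter $\bar a$} whose solution set sits inside $\{\bar u: tp(\bar u)=tp(\bar a)\}$ and contains $\bar a$ --- but every type is trivially isolated over the parameter $\bar a$ (by $\bar u=\bar a$), so no conflict with non-principality arises until the parameter is removed, and that is the hard part. A sanity check: your argument never uses $Th(\mathbb N)$ essentially, yet non-standard prime models of other completions of $\PA$ are homogeneous of Scott rank exactly $\omega$, so true arithmetic must enter precisely where you extract the parameter-free consequence. The paper's proof does this while staying at standard finite levels: it expresses, as a first-order scheme in the bounded relations $\leq_n^x$, that no tuple is $1$-free ``with conclusion at level $n$'' (\cref{eq:standardmodelwitness}), notes that $\mathbb N$ and hence $\M$ satisfies this scheme, and contradicts the existence of a tuple that is $1$-free but not $n$-free, which $1<SR(\M)\leq\omega$ guarantees; the case $SR(\M)=1$ is handled separately by pulling a witness of a $\Sinf{1}$ orbit definition down to a standard numeral. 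Your backup plan (exhibiting an $\omega$-free tuple) names the right target, but the ``fact'' it invokes is precisely what has to be proved.
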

\begin{proof}
  Assume that $\M\models Th(\mathbb N)$. Then the elementary
  submodel $\N\preccurlyeq \M$ consisting of all Skolem constants in $\M$ is
  isomorphic to $\mathbb N$.
  Because $\mathbb N$ is standard and $SR(\mathbb N)=1$ it satisfies $\forall \bar a \bar
  a'( \forall x(\bar a\leq_1^x\bar a') \leftrightarrow \forall x(\bar
  a\leq_n^x\bar a'))$.
  Furthermore, no tuple in $\mathbb N$ is $1$-free. In particular, $\mathbb N$ satisfies
  the following version of $1$-freeness for the bounded back-and-forth
  relations:
  $\forall \bar a \exists \bar b\forall \bar a'\bar b' \exists x\left(\bar a \bar
  b\leq_0^x\bar
  a'\bar b' \to \forall y(\bar a \leq_1^y\bar a')\right)$. 
  Combining these two observations we get that $\mathbb N$ satisfies the
  following form of non-freeness:
 \begin{equation}\label{eq:standardmodelwitness}
    \forall \bar a \exists \bar b\forall \bar a'\bar b' \exists x\left(\bar a \bar
      b \leq_0^x \bar a'\bar b'
  \to \forall y (\bar a\leq_n^y \bar a')\right)\end{equation}
 Say $SR(\M)=\alpha$ with $1<\alpha\leq\omega$. In the case where
 $\alpha=\omega$ we have that all automorphism orbits are $\Sinf{\omega}$
 definable. Notice that this implies that for every automorphism orbit there is
 $n\in\omega$ such that the orbit is $\Sinf{n}$ definable and that the
 complexity of the defining formulas of the automorphism orbits is cofinal in
 $\omega$. Furthermore, recall that a tuple has $\Sinf{n}$ definable
 automorphism orbit if and only if it is not $n$-free~\cite[Lemma
 II.65]{montalban2021}. In any case, since $\alpha>1$ and $\alpha$ is the least
 such that no tuple is $\alpha$-free we get that there is a tuple $\bar a$
 that is $1$-free but not $n$-free for some $n<\omega$. So, $\M$ satisfies
 $\forall b \exists \bar a' \bar b' ( \bar a\bar b \leq_0\bar
 a'\bar b' \land \bar a\not \leq_1\bar a')$ and $(\exists \beta <n) \exists \bar
 b\forall \bar a' \bar b' (\bar a \bar b \leq_\beta \bar a'\bar b'\to \bar
 a \leq_n\bar a').$
 In particular, by the nestedness of the back-and-forth relations $\M$ does not satisfy $\exists \bar b \forall \bar a'\bar b'
 (\bar a\bar b \leq_0 \bar a'\bar b'\to \bar a\leq_n \bar a')$. This implies that $\M$ does not satisfy
 \cref{eq:standardmodelwitness}, contradicting that $\M\models Th(\mathbb N)$.

 It remains to show that $SR(\M)\neq 1$. Assume the contrary and let $a\in M\setminus \mathbb N$ with automorphism orbit defined by the $\Sinf{1}$ formula $\phi$. Then by elementarity there is
 $n\in\omega$
 such that $\dot n$ satisfies one of the disjuncts of $\phi$ and thus $\dot n$ is in the automorphism
 orbit of $a$. But this is a contradiction, since $\dot n$'s automorphism orbit is
 a singleton.
\end{proof}
It is easy to see that every homogeneous model has Scott rank at most
$\omega+1$, as every automorphism orbit is definable by the infinitary
conjunction over the formulas in its type. In the case where $T$ is not true
arithmetic we do not know whether there are
non-atomic homogeneous models of $T$ with Scott rank $\omega$.
\begin{question}
  Is there a non-atomic homogeneous model $\M$ with $SR(\M)=\omega$?
\end{question}
\section{The canonical structural $\alpha$-jump and
bi-interpretability}\label{sec:alphajumps}
One way to obtain a characterization of the Scott Spectrum of $\PA$ is to find
a reduction from a well-understood class of structures to models of $\PA$. One
particularly well-understood class is the class of linear orderings. It follows from results of
Ash~\cite{ash1986} that $SS(\mathrm{LO})=\{\alpha<\omega_1\}$.
\cref{thm:srlowerbound} shows that we can not find a reduction from linear
orderings that preserves Scott ranks. Instead we need a reduction that
preserves Scott ranks up to an additive factor of $\omega$. 
We will do this by giving a reduction via $\Dinf{1}$ bi-interpretability from
the class of linear orderings to the class of canonical structural $\omega$
jumps of models of a given completion of $\PA$ in \cref{sec:loandpa}. 
Before that we need to discuss infinitary bi-interpretability and canonical
structural $\omega$-jumps.

\subsection{Reductions via infinitary bi-interpretability}
Infinitary bi-interpretability  between structures, studied by Harrison-Trainor, Miller, and
Montalb\'an~\cite{harrison-trainor2018b}, is a weakening of the model-theoretic notion of bi-interpretability. 
Let us recall these notions.

\begin{definition}[\cite{harrison-trainor2018b}]\label{def:infint}
  A structure $\A=(A,P_0^\A,\dots)$ (where $P_i^\A\subseteq A^{a(i)}$) is
  \emph{infinitarily interpretable} in $\B$ if there are relations
  $Dom^\A_\B,\sim,R_0,R_1,\dots$, each $L_{\omega_1\omega}$ definable without
  parameters in the language of $\B$ such that
  \begin{enumerate}
    \tightlist
  \item $Dom_\A^\B\subseteq \B^{<\omega}$,
  \item $\sim$ is an equivalence relation on $Dom^\B_\A$,
  \item $R_i\subseteq (Dom^\B_\A)^{a(i)}$ is closed under $\sim$,
  \end{enumerate}
  and there exists a function $f_\A^\B: Dom_\A^\B\to \A$ which induces an
  isomorphism:
  \[ f_\A^\B: \A^\B=(Dom_\A^\B, R_0,R_1,...){/}{\sim}\cong \A.\]
  We say that $\A$ is $\Dinf{\alpha}$ interpretable in $\B$ if 
  the above relations are both $\Sinf{\alpha}$ and $\Pinf{\alpha}$ definable in
  $\B$.
\end{definition}
\begin{definition}[\cite{harrison-trainor2018b}]\label{def:infbiint}
  Two structures $\A$ and $\B$ are \emph{infinitarily bi-interpretable} if
  there are interpretations of each structure in the other such that the
  compositions
  \[ f_\B^\A\circ \tilde f_\A^\B: Dom_\B^{(Dom^\B_\A)} \to \B \text{ and
  } f_\A^\B\circ \tilde f_\B^\A: Dom_\A^{(Dom^\A_\B)} \to \A\] 
  are $L_{\omega_1\omega}$ definable in $\B$, respectively, $\A$. 
  If $\A$ and $\B$ are $\Dinf{\alpha}$ interpretable in each other and the
  associated compositions are $\Dinf{\alpha}$ definable, then we say that $\A$
  and $\B$ are \emph{$\Dinf{\alpha}$ bi-interpretable}.
\end{definition}
The following definition is a generalization of reducibility via effective
bi-interpretability. See~\cite{montalban2021a} for a thorough treatment of
effective bi-interpretability.
\begin{definition}
  A class of structures $\mathfrak C$ is \emph{reducible via infinitary
  bi-interpretability} to a class of structures $\mathfrak D$ if there are
  infinitary formulas defining domains, relations, and isomorphisms of an
  infinitary bi-interpretation so that every structure in $\mathfrak C$ is
  bi-interpretable with a structure in $\mathfrak D$ using this
  bi-interpretation. If all the formulas are $\Dinf{\alpha}$ then we say that
  $\mathfrak C$ is \emph{reducible via $\Dinf{\alpha}$ 
  bi-interpretability to $\mathfrak D$}.
\end{definition}
Two structures that are infinitary bi-interpretable behave in the same way.
A particularly striking testimony of this is the following result.
\begin{theorem}[\cite{harrison-trainor2018b}]\label{thm:autgroupsiso}
  Two structures are infinitary bi-interpretable if and only if their
  automorphism groups are Baire-measurably isomorphic.
\end{theorem}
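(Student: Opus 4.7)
The plan is to prove the two directions separately: the forward direction by a direct pushforward construction, and the reverse by appealing to automatic continuity together with an orbit analysis.

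For the forward direction, assume $\A$ and $\B$ are infinitarily bi-interpretable via the data of \cref{def:infbiint}. I define $\Phi\colon\mathrm{Aut}(\B)\to\mathrm{Aut}(\A)$ by pushing an automorphism $\sigma$ of $\B$ through the interpretation: since $\sigma$ preserves every $L_{\omega_1\omega}$-definable relation, it restricts to a self-map of $Dom^\B_\A$, respects $\sim$, and preserves each $R_i$; it therefore induces an automorphism of $\A^\B/{\sim}$ which, via $f^\B_\A$, transports to an automorphism $\Phi(\sigma)$ of $\A$. A routine verification shows that $\Phi$ is a group homomorphism and is Borel measurable, as the value $\Phi(\sigma)(a)$ is determined by the $L_{\omega_1\omega}$-definable $\sim$-class of $\sigma(\bar u)$ for any fixed preimage $\bar u$ of $a$. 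Construct $\Psi\colon\mathrm{Aut}(\A)\to\mathrm{Aut}(\B)$ symmetrically. The bi-interpretability hypothesis---that the compositions $f^\B_\A\circ\tilde f^\A_\B$ and $f^\A_\B\circ\tilde f^\B_\A$ are $L_{\omega_1\omega}$-definable---is exactly what forces $\Phi\circ\Psi$ and $\Psi\circ\Phi$ to be the identities. Hence $\Phi$ is a Borel, and therefore Baire-measurable, group isomorphism.

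For the reverse direction, the key tool is \emph{automatic continuity}: by Pettis's theorem every Baire-measurable homomorphism between Polish groups is continuous, so any Baire-measurable isomorphism $\Phi\colon\mathrm{Aut}(\A)\to\mathrm{Aut}(\B)$ is in fact a topological isomorphism. A neighborhood base of the identity in $\mathrm{Aut}(\A)$ consists of pointwise stabilizers of finite tuples, and two tuples are $\mathrm{Aut}(\A)$-conjugate iff their stabilizers are. Consequently, $\Phi$ induces a canonical bijection between $\mathrm{Aut}(\A)$-orbits on $A^{<\omega}$ and $\mathrm{Aut}(\B)$-orbits on $B^{<\omega}$. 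Each orbit is $L_{\omega_1\omega}$-definable by Scott's theorem, so this correspondence can be read off by formulas. From this data I would build the interpretation: for each $a \in A$, name $a$ inside some tuple $\bar a$ whose orbit pins down $a$ as a designated coordinate, and define $Dom^\B_\A$, $\sim$, $R_i$, and $f^\B_\A$ to match the corresponding orbit in $B$. A symmetric construction using $\Phi^{-1}$ produces the interpretation of $\B$ in $\A$, and the identities $\Phi\circ\Phi^{-1}=\mathrm{id}$, $\Phi^{-1}\circ\Phi=\mathrm{id}$, combined with Scott-definability of the round-trip orbit assignments, yield definability of the compositions as demanded by \cref{def:infbiint}.

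The main obstacle is the reverse direction, and specifically upgrading the abstract orbit-level bijection to an honest $L_{\omega_1\omega}$-bi-interpretation. The subtlety is that stabilizers determine tuples only up to orbit, so to recover individual elements of $\A$ inside $\B$ one must coherently select canonical ``naming tuples'' across all orbits, and do so in a way that the selection itself is $L_{\omega_1\omega}$-definable. I expect this to require a Scott-style induction along the back-and-forth hierarchy, labeling orbits by canonical defining formulas of increasing quantifier complexity; once such a coherent naming scheme is in place, the interpretation formulas can be written down and the compositions verified.
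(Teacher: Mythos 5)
This theorem is quoted from \cite{harrison-trainor2018b}; the paper itself gives no proof, so there is no in-paper argument to compare against. Judged on its own, your forward direction is essentially the standard and correct argument: an interpretation of $\A$ in $\B$ induces a continuous (hence Baire-measurable) homomorphism $\mathrm{Aut}(\B)\to\mathrm{Aut}(\A)$, and the $L_{\omega_1\omega}$-definability, hence equivariance, of the two composite maps is what makes the induced homomorphisms mutually inverse.

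The reverse direction, however, contains a genuine error beyond the gap you flag at the end. You assert that ``two tuples are $\mathrm{Aut}(\A)$-conjugate iff their stabilizers are,'' and you build the construction on the resulting ``canonical bijection between orbits.'' That assertion is false: in any rigid structure (for instance the standard model of arithmetic) every point stabilizer equals the trivial group while the orbits are distinct singletons, so stabilizers do not determine orbits and a topological isomorphism of automorphism groups induces no orbit correspondence of the kind you describe. The correct mechanism is element-by-element rather than orbit-by-orbit: by Pettis's theorem $\Phi^{-1}$ is continuous, so for each $a\in A$ the open subgroup $\Phi^{-1}(\mathrm{Stab}(a))$ contains $\mathrm{Stab}(\bar b_a)$ for some finite tuple $\bar b_a\in B^{<\omega}$; one then lets $Dom^\B_\A$ consist of pairs $(n,\bar b)$ with $\bar b$ in the $\mathrm{Aut}(\B)$-orbit of $\bar b_{a_n}$ and sets $f^\B_\A(n,\bar b)=\Phi(\sigma)(a_n)$ for any $\sigma$ with $\sigma(\bar b_{a_n})=\bar b$, the containment of stabilizers being exactly what makes this well defined. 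Once this is in place, the definability worry you raise largely evaporates: $Dom^\B_\A$, $\sim$, the relations $R_i$, and the two composites are all invariant subsets of countable sets, hence countable unions of automorphism orbits, each of which is $L_{\omega_1\omega}$-definable without parameters by Scott's theorem; no further ``coherent naming scheme'' or induction along the back-and-forth hierarchy is required. The missing idea is thus the stabilizer-containment coding, not the labeling of orbits by canonical formulas.
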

It is easy to see that two $\Dinf{1}$ bi-interpretable structures have the same Scott
sentence complexity and thus also the same Scott rank. However, this
observation is not true for infinitary bi-interpretability in general.
Furthermore, \cref{thm:srlowerbound} shows that there is no hope in having
a reduction from linear orderings to Peano arithmetic that preserves the Scott
rank. Instead, our goal is that given a linear ordering $\L$ we produce
a model $\N_\L$ such that $\L$ and $\N_\L$ are bi-interpretable and
$SR(\N_\L)=\omega+SR(\L)$. Infinitary bi-interpretations of two structures $\A$ and $\B$
such that $SR(\A)$ is equal to $\alpha+SR(\B)$ have some interesting
properties. As we will see in \cref{cor:alphajumpbiint} they give $\Dinf{1}$ bi-interpretations
between $\A$ and $\B_{(\alpha)}$, the structural $\alpha$-jump of $\B$. 

\subsection{The canonical structural $\alpha$-jump}
The canonical structural $\alpha$-jump is an extension of the ideas developed
by Montalb\'an~\cite{montalban2009}, see \cite{montalban2021a} for a more up-to-date
exhibition including recent developments. The canonical structural jump of
a structure $\A$ is obtained by adding relation symbols for the $\Pinf{1}$ types of tuples in $A$ to
its vocabulary. It is $\Pinf{1}$ definable in $\A$ and $\Dinf{1}$
bi-interpretable with the jump of $\A$. When trying to  generalize to the $\alpha$-jump, it is
not immediately clear how these definability properties carry over. After all, for
$\alpha>1$, there might be continuum many formulas in the $\Pinf{\alpha}$-type
of a tuple. However, surprisingly we will see that these properties do carry over.
\begin{definition}\label{def:strucjump}
  Given a $\tau$-structure $\A$ and a countable ordinal $\alpha>0$ fix an injective enumeration  $(\bar
  a_i)_{i\in\omega}$ of representatives of the 
  $\alpha$-back-and-forth equivalence classes in $\A$. The \emph{canonical structural
  $\alpha$-jump} $\A_{(\alpha)}$ of $\A$ is the structure in the vocabulary
  $\tau_{(\alpha)}$ obtained by adding to
  $\tau$ relation symbols $R_i$ interpreted as
  \[ \bar b\in R_i^{\A_{(\alpha)}}\LR \bar a_i\leq_\alpha \bar b.\]
  We will use the convention that $\A_{(0)}=\A$.
\end{definition}
Notice that the canonical structural $\alpha$-jump of a structure is only
unique up to choice of enumeration of all the $\Pinf{\alpha}$ types. When
working with the jump we always have a fixed enumeration in mind. This does not
impose a strong restriction as for two enumerations the two resulting
candidates for the canonical structural $\alpha$-jump are $\Dinf{1}$
bi-interpretable.

\begin{proposition}\label{prop:structjumpformulas}
  Let $\A$ be a $\tau$-structure and $\phi$
  be a $\Sinf{\alpha+1}$ $\tau$-formula. Then there is
  a $\Sinf{1}$ $\tau_{(\alpha)}$-formula $\psi$ such that
\[  
\A_{(\alpha)} \models \forall \bar x \ (\phi(\bar x) \leftrightarrow \psi(\bar x)).
  \]
\end{proposition}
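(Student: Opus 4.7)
The plan is to reduce the question to expressing $\Pinf{\alpha}$ $\tau$-formulas by $\Sinf{1}$ $\tau_{(\alpha)}$-formulas, since a $\Sinf{\alpha+1}$ formula is by definition a countable disjunction of $\exists\bar y$ applied to a $\Pinf{\alpha}$ matrix, and the $\Sinf{1}$ formulas are closed under countable disjunction and existential quantification. The key ingredient will be Karp's theorem, cited earlier in the paper: $\bar a_i\leq_\alpha \bar b$ iff every $\Pinf{\alpha}$ formula true of $\bar a_i$ in $\A$ is true of $\bar b$. Combined with the defining property of the new symbols $R_i$ of $\tau_{(\alpha)}$, this means that the truth of any $\Pinf{\alpha}$ $\tau$-formula on a tuple $\bar b$ is completely determined by which $R_i$'s hold of $\bar b$.

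Concretely, for a fixed $\Pinf{\alpha}$ $\tau$-formula $\theta(\bar x)$ of arity $n$, I plan to establish the equivalence
\[
\A_{(\alpha)}\models \theta(\bar b)\LR \bigvvee_{j:\ |\bar a_j|=n,\ \A\models \theta(\bar a_j)} R_j(\bar b).
\]
For the forward direction, take $\bar a_j$ to be the representative of the $\alpha$-back-and-forth equivalence class of $\bar b$. Since $\bar a_j\equiv_\alpha \bar b$ and $\theta$ is $\Pinf{\alpha}$, Karp's theorem applied to $\bar b\leq_\alpha \bar a_j$ gives $\A\models\theta(\bar a_j)$, while $\bar a_j\leq_\alpha\bar b$ yields $R_j(\bar b)$. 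The backward direction is immediate: if $R_j(\bar b)$ then $\bar a_j\leq_\alpha \bar b$, so by Karp's theorem the $\Pinf{\alpha}$ formula $\theta(\bar a_j)$ transfers to $\bar b$. The right-hand side is a countable disjunction of atomic $\tau_{(\alpha)}$-formulas, which is in particular $\Sinf{1}$.

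Given a $\Sinf{\alpha+1}$ $\tau$-formula $\phi(\bar x)=\bigvvee_i \exists \bar y_i\, \theta_i(\bar x, \bar y_i)$ with each $\theta_i$ $\Pinf{\alpha}$, I will substitute each matrix $\theta_i$ by its $\Sinf{1}$ $\tau_{(\alpha)}$-equivalent from the previous step, pull the disjunctions through the existential quantifiers, and amalgamate the resulting countable disjunction. This produces a formula of the shape $\bigvvee_{i,j} \exists \bar y_i\, R_j(\bar x, \bar y_i)$, which is manifestly $\Sinf{1}$ in $\tau_{(\alpha)}$ and equivalent to $\phi$ on $\A_{(\alpha)}$ by construction.

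I do not anticipate any serious obstacle; the main thing to be careful about is bookkeeping of arities, so that each $R_j$ used in place of $\theta_i(\bar x, \bar y_i)$ is applied to a tuple of the matching length $|\bar x|+|\bar y_i|$, and that the final formula is presented in the standard $\Sinf{1}$ normal form. The proposition is really just a repackaging of Karp's theorem through the defining property of the canonical structural $\alpha$-jump.
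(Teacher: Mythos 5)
Your proof is correct and follows essentially the same route as the paper: reduce to the $\Pinf{\alpha}$ case, take the disjunction of $R_j$ over those representatives $\bar a_j$ satisfying the formula, and verify both directions using the definition of $R_j$ together with preservation of $\Pinf{\alpha}$ formulas under $\leq_\alpha$ (Karp's theorem). Your explicit attention to arities and to invoking Karp by name only makes precise what the paper leaves implicit.
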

\begin{proof}
Since a $\Sigma_{\alpha+1}^{\mathrm{in}}$ $\tau$-formula is a disjunction of
existentially quantified $\Pi_{\alpha}^{\mathrm{in}}$ $\tau$-formulas, it is enough to prove that every $\Pi_{\alpha}^{\mathrm{in}}$ $\tau$-formula $\varphi$ is equivalent to a $\Sigma_1^{\mathrm{in}}$ $\tau_{(\alpha)}$-formula $\psi$.
  Let $(\bar a_i)_{i\in\omega}$ be the enumeration of representatives of
  $\alpha$-back-and-forth classes
  used to generate $\A_{(\alpha)}$. Given $\phi$ let $I_{\phi}=\{i: \A\models
  \phi(\bar a_i)\}$. We claim that
  $\psi=\bigvvee_{i\in I_\phi} R_i$ is as required.

  The proof of the claim follows easily from the definition of $\psi$. 
  Suppose first that $\A\models \phi(\bar a)$.
  Then $\abar\equiv_\a \abar_i$ for some $i\in I$.
  It follows that $\A_{(\alpha)}\models R_i(\bar a)$  and so  $\A_{(\alpha)}\models \psi(\bar a)$. 
  Conversely,  if  $\A_{(\alpha)}\models \psi(\bar a)$, then $\A_{(\alpha)}\models R_i(\bar a)$ for some $i\in I$.
  It follows that $\abar_i\leq_\a \abar$.
  Since $\A\models \varphi(\abar_i)$ and $\varphi$ is $\Pi_{\alpha}^{\mathrm{in}}$, we have that $\A\models \varphi(\abar)$ too.
\end{proof}

Let $\Gamma$ be a set of formulas. Then $\Gamma$ is $\Pinf{\alpha}$-supported
in $\A$ if there is a $\Pinf{\alpha}$ formula $\phi$ such that 
\[ \A\models \exists \bar x \phi(\bar x) \land \forall \bar x\left(\phi(\bar x)\to
\bigwwedge_{\gamma\in \Gamma} \gamma(\bar x)\right).\]
A proof of the following fact appeared in~\cite{montalban2021}.
\begin{proposition}[{\cite[Lemma
  II.62]{montalban2021}}]\label{prop:pialphasupported}
  For every ordinal, every structure $\A$ and every tuple $\bar a\in A^{<\omega}$,
  $\Pinf{\alpha}\tm tp^\A(\bar a)$ is $\Pinf{\alpha}$-supported in $\A$.
\end{proposition}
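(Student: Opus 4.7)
The plan is to construct, by transfinite induction on $\alpha$, for every tuple $\bar c \in A^{<\omega}$ a $\Pinf{\alpha}$-formula $\Phi^\alpha_{\bar c}(\bar x)$ such that (a) $\A\models\Phi^\alpha_{\bar c}(\bar c)$ and (b) $\A\models\Phi^\alpha_{\bar c}(\bar b)\Rightarrow\bar c\leq_\alpha\bar b$ for every $\bar b\in A^{<\omega}$. Once this family is in place, setting $\phi:=\Phi^\alpha_{\bar a}$ yields the proposition: $\bar a$ itself witnesses $\A\models\exists\bar x\,\phi(\bar x)$, and by Karp's theorem (b) says precisely that any $\bar b$ satisfying $\phi$ satisfies every $\Pinf{\alpha}$-formula in $tp^\A(\bar a)$, so that $\phi$ supports $\Pinf{\alpha}\tm tp^\A(\bar a)$.

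At the base case $\alpha=0$, I take $\Phi^0_{\bar c}(\bar x)$ to be the conjunction of all quantifier-free $\tau_{|\bar c|}$-formulas $\theta$ with $\A\models\theta(\bar c)$ together with the negations of those $\theta$ with $\A\not\models\theta(\bar c)$; this is a countable, well-formed, quantifier-free infinitary formula and captures $\leq_0$ by the paper's convention. For the inductive step, the back-and-forth recursion $\bar c\leq_\alpha\bar b\iff\forall\beta<\alpha\,\forall\bar y\,\exists\bar d\,(\bar b\bar y\leq_\beta\bar c\bar d)$ suggests the recipe
\[
\Phi^\alpha_{\bar c}(\bar x)\;=\;\bigwwedge_{\beta<\alpha}\forall\bar y\,\bigvvee_{\bar d\in A^{<\omega}}\Theta^\beta_{\bar c\bar d}(\bar x,\bar y),
\]
where $\Theta^\beta_{\bar c\bar d}$ is a $\Sinf{\beta}$-formula satisfied by $(\bar c,\bar d)$ whose satisfaction by $(\bar b,\bar e)$ forces $\bar b\bar e\leq_\beta\bar c\bar d$. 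Satisfaction of $\Phi^\alpha_{\bar c}(\bar c)$ is then witnessed by taking $\bar d=\bar y$ in each inner disjunct, and the implication $\Phi^\alpha_{\bar c}(\bar b)\Rightarrow\bar c\leq_\alpha\bar b$ follows by unwinding the definitions and invoking the inductive hypothesis on $\Theta$. The complexity calculation closes cleanly: $\bigvvee_{\bar d}\Theta^\beta$ stays $\Sinf{\beta}$ (countable disjunctions preserve $\Sinf{\beta}$ for $\beta\geq 1$), $\forall\bar y$ promotes it to $\Pinf{\beta+1}$, and the countable conjunction over $\beta<\alpha$ lands in $\Pinf{\alpha}$.

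The main obstacle is producing the auxiliary $\Sinf{\beta}$-supporting family $\Theta^\beta_{\bar c\bar d}$ with precisely this complexity. A naive attempt would express "$\bar u\bar v\leq_\beta\bar c\bar d$" as the conjunction of the $\Sinf{\beta}$-type of $\bar c\bar d$, which is $\Pinf{\beta+1}$ and would push $\Phi^\alpha_{\bar c}$ up to $\Pinf{\alpha+1}$. The remedy, carried out in Montalb\'an's book~\cite{montalban2021}, is to build $\Theta^\beta$ by a parallel transfinite induction establishing the dual principle that every $\Sinf{\beta}\tm tp^\A$ is $\Sinf{\beta}$-supported in an analogous sense; the support formula forces $\leq_\beta$ via witnessed back-and-forth strategies rather than via the full type. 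This interlocked simultaneous recursion, together with the downward closure of the back-and-forth relations across limit and successor ordinals, is the substantive content of the argument; once the $\Theta^\beta$ are in hand, the construction of $\Phi^\alpha_{\bar c}$ closes by the complexity bookkeeping described above.
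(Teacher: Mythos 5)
The paper does not actually prove this proposition; it cites Lemma II.62 of Montalb\'an's book. Measured against the argument behind that citation, your proposal has a genuine gap, located exactly where you park the ``main obstacle'': the auxiliary family $\Theta^\beta_{\bar c\bar d}$ you need does not exist. You require, for a tuple $\bar v=\bar c\bar d$, a single $\Sinf{\beta}$ formula $\Theta$ with $\A\models\Theta(\bar v)$ such that $\A\models\Theta(\bar u)$ forces $\bar u\leq_\beta\bar v$. Since $\bar u\leq_\beta\bar v$ holds exactly when $\bar u$ satisfies every $\Sinf{\beta}$ formula true of $\bar v$, this is precisely the claim that the $\Sinf{\beta}$-type of $\bar v$ is $\Sinf{\beta}$-supported, and that ``dual principle'' is false already for $\beta=1$: take a structure with one binary relation in which $v$ has infinitely many $R$-successors and each $u_n$ has exactly $n$ of them. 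Any $\Sinf{1}$ formula true of $v$ has a single finitary existential disjunct true of $v$, and that disjunct is satisfied by $u_n$ for all sufficiently large $n$; yet $u_n\not\leq_1 v$, because the $\Pinf{1}$ formula ``$x$ has at most $n$ $R$-successors'' is true of $u_n$ and false of $v$. The asymmetry is essential here: upward cones $\{\bar b:\bar v\leq_\beta\bar b\}$ are $\Pinf{\beta}$-definable, but downward cones $\{\bar u:\bar u\leq_\beta\bar v\}$ are not $\Sinf{\beta}$-definable in general, and your recursion needs the latter. This claim is also not what Montalb\'an's book establishes, so deferring to it does not close the gap; and unwinding the back-and-forth definition honestly, as you do, inflates the complexity to $\Pinf{\alpha+1}$ or beyond rather than $\Pinf{\alpha}$.

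The actual proof is much softer and uses no transfinite recursion on formulas at all: it is a counting argument exploiting the countability of $\A$. Let $\Gamma=\Pinf{\alpha}\tm tp^\A(\bar a)$. There are only countably many tuples $\bar b\in A^{|\bar a|}$, so for each $\bar b$ that fails to satisfy $\Gamma$ choose one formula $\gamma_{\bar b}\in\Gamma$ with $\A\not\models\gamma_{\bar b}(\bar b)$, and set $\phi=\bigwwedge_{\bar b}\gamma_{\bar b}$. This is a countable conjunction of $\Pinf{\alpha}$ formulas, hence $\Pinf{\alpha}$; it belongs to $\Gamma$, so $\A\models\phi(\bar a)$ and in particular $\A\models\exists\bar x\,\phi(\bar x)$; and any $\bar c$ with $\A\models\phi(\bar c)$ must satisfy all of $\Gamma$, since otherwise $\A\not\models\gamma_{\bar c}(\bar c)$ would contradict $\A\models\phi(\bar c)$. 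Countability of the structure, together with closure of $\Pinf{\alpha}$ under countable conjunctions, is the entire engine of the proof; no analysis of the back-and-forth hierarchy is needed.
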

Note that in \cref{prop:pialphasupported} the supporting formula $\phi$ is indeed
equivalent to the $\Pinf{\alpha}\tm tp^\A(\bar a)$ as it is itself
$\Pinf{\alpha}$. In other words, $\A\models\phi(\bar b)\iff \bar
a\leq_\alpha\bar b$.
As a consequence we get a syntactic definition for the canonical structural
$\alpha$-jump, similarly to the structural $1$-jump.
\begin{corollary}\label{cor:structjumpint}
  For any structure $\A$ and non-zero ordinal $\alpha$, $\A_{(\alpha)}$ is
  $\Dinf{\alpha+1}$ interpretable in $\A$.
\end{corollary}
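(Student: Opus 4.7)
The plan is to use \cref{prop:pialphasupported} as the key tool: for each added relation symbol $R_i$ of the canonical structural $\alpha$-jump $\A_{(\alpha)}$, the defining condition $\bar a_i \leq_\alpha \bar b$ is captured by a single $\Pinf{\alpha}$ formula in $\A$, which places $R_i$ inside $\Dinf{\alpha+1}$.

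First I would set up the interpretation. Take $Dom_{\A_{(\alpha)}}^\A \subseteq A^{<\omega}$ to be (codes of) length-$1$ tuples, let $\sim$ be equality, and let $f\colon Dom_{\A_{(\alpha)}}^\A \to \A_{(\alpha)}$ be the identity; all of these are quantifier-free definable. The original vocabulary symbols in $\tau$ are quantifier-free definable as well, since $\A$ and $\A_{(\alpha)}$ share the same universe and the same interpretations of the $\tau$-symbols.

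The content is in defining the new relations $R_i$ of $\tau_{(\alpha)} \setminus \tau$. By \cref{prop:pialphasupported}, for each $i$ there is a $\Pinf{\alpha}$ formula $\phi_i(\bar x)$ supporting the $\Pinf{\alpha}$-type of $\bar a_i$, and by the remark following that proposition the supporting formula is itself in the type, so
\[
\A \models \phi_i(\bar b) \iff \bar a_i \leq_\alpha \bar b \iff \bar b \in R_i^{\A_{(\alpha)}}.
\]
Thus $R_i$ is $\Pinf{\alpha}$-definable in $\A$. Any $\Pinf{\alpha}$ formula is both $\Pinf{\alpha+1}$ (as a single conjunct) and $\Sinf{\alpha+1}$ (as a single disjunct of an existential with empty block), so $R_i$ is $\Dinf{\alpha+1}$-definable. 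The induced map $f$ is then an isomorphism onto $\A_{(\alpha)}$ by construction.

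There is no serious obstacle here: the heavy lifting of turning the back-and-forth relation $\leq_\alpha$ into a single infinitary formula was already done in \cref{prop:pialphasupported}. The only points to be careful about are (i) observing that the supporting formula really defines $R_i$ on all of $A^{|\bar a_i|}$ (not just on the orbit of $\bar a_i$), which is precisely the "in other words" comment after \cref{prop:pialphasupported}; and (ii) verifying the trivial but necessary inclusion $\Pinf{\alpha} \subseteq \Dinf{\alpha+1}$ so that a $\Pinf{\alpha}$-definition is good enough for a $\Dinf{\alpha+1}$-interpretation.
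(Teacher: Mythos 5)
Your proposal is correct and follows essentially the same route as the paper: take $Dom_{\A_{(\alpha)}}^\A = A$ with $\sim$ the identity, and use the supporting formula from \cref{prop:pialphasupported} (together with the remark that it actually defines $\{\bar b : \bar a_i \leq_\alpha \bar b\}$) to get a $\Pinf{\alpha}$, hence $\Dinf{\alpha+1}$, definition of each $R_i$. The paper's proof is just a terser version of the same argument.
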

\begin{proof}
  All the relations $R_i^{\A_{(\alpha)}}$ are $\Pinf{\alpha}$ definable in
  $\A$. These definitions together with $Dom_{\A_{(\alpha)}}^\A=A$ and $\sim$ the graph of the
  identity function yield a $\Dinf{\alpha+1}$ interpretation of $\A_{(\alpha)}$
  in $\A$.
\end{proof}
\cref{prop:pialphasupported} also lets us proof the dual of
\cref{prop:structjumpformulas}.

\begin{proposition}\label{prop:structjumpformdual}
  Let $\A$ be a $\tau$-structure and $\phi$ be a $\Sinf{1}$
  $\tau_{(\alpha)}$-formula. Then there is a $\Sinf{\alpha+1}$ $\tau$-formula
  $\psi$ such that for all $\bar a\in A^{<\omega}$
  \[ \A\models \psi(\bar a) \LR \A_{(\alpha)}\models \phi(\bar a).\]
\end{proposition}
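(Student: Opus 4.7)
The plan is to translate the $\Sinf{1}$ $\tau_{(\alpha)}$-formula $\phi$ into a $\Sinf{\alpha+1}$ $\tau$-formula by replacing each atomic and negated atomic occurrence of the new predicates $R_i$ by an appropriate $\tau$-definition, and then checking that the result stays within $\Sinf{\alpha+1}$ under the connectives and quantifiers used in $\phi$.

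First, by \cref{prop:pialphasupported} together with the remark immediately after it, for every $i$ there is a $\Pinf{\alpha}$ $\tau$-formula $\chi_i(\bar x)$ supporting $\Pinf{\alpha}\tm tp^\A(\bar a_i)$, i.e.\ satisfying $\A\models \chi_i(\bar b) \iff \bar a_i\leq_\alpha \bar b$ for every tuple $\bar b\in A^{<\omega}$. By the defining clause of $\A_{(\alpha)}$, this gives
\[ \A_{(\alpha)}\models R_i(\bar b) \iff \A\models \chi_i(\bar b).\]
Thus the atomic formula $R_i(\bar x)$ is interpreted in $\A_{(\alpha)}$ by the $\Pinf{\alpha}$ $\tau$-formula $\chi_i$, and the negated atomic $\neg R_i(\bar x)$ is interpreted by $\neg \chi_i(\bar x)$, which is (equivalent to) a $\Sinf{\alpha}$ $\tau$-formula. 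Atomic and negated atomic $\tau$-formulas in $\phi$ need not be changed; they are already $\Sinf{1}$, hence $\Sinf{\alpha+1}$.

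Second, I use the standard fact that for $\alpha\geq 1$ the class $\Sinf{\alpha+1}$ is closed (up to logical equivalence) under countable disjunctions, countable conjunctions, and existential quantification: a countable conjunction of $\Sinf{\alpha+1}$ formulas can be rewritten by pulling out the existential blocks and using that countable conjunctions of $\Pinf{\alpha}$ formulas are $\Pinf{\alpha}$, and a countable disjunction stays $\Sinf{\alpha+1}$ trivially since $\Pinf{\alpha}\subseteq \Sinf{\alpha+1}$. Since any $\Sinf{1}$ $\tau_{(\alpha)}$-formula is built from atomic and negated atomic $\tau_{(\alpha)}$-formulas by precisely these operations, substituting each atomic/negated atomic occurrence of $R_i$ as in the previous paragraph produces a $\Sinf{\alpha+1}$ $\tau$-formula $\psi$ satisfying $\A\models \psi(\bar a) \iff \A_{(\alpha)}\models \phi(\bar a)$.

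The only real subtlety is that negated $R_i$ contributes a $\Sinf{\alpha}$ (and not $\Pinf{\alpha}$) piece, so one must verify that $\Sinf{\alpha+1}$ absorbs both $\Pinf{\alpha}$ and $\Sinf{\alpha}$ subformulas under the connectives used; this is immediate from $\Pinf{\alpha}\cup \Sinf{\alpha}\subseteq \Sinf{\alpha+1}$ together with the closure properties just mentioned. No syntactic induction on $\phi$ beyond this substitution is needed, and the correctness of $\psi$ is clear from the defining equivalence $\A\models \chi_i(\bar b)\iff \A_{(\alpha)}\models R_i(\bar b)$.
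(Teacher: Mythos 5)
Your argument is the same as the paper's: replace each occurrence of $R_i$ by the $\Pinf{\alpha}$ supporting formula of $\Pinf{\alpha}\tm tp^\A(\bar a_i)$ furnished by \cref{prop:pialphasupported} and check that the substituted formula is $\Sinf{\alpha+1}$; the paper simply asserts the last step as clear, whereas you spell it out. One correction to that spelled-out step: $\Sinf{\alpha+1}$ is \emph{not} closed under countable conjunctions --- you cannot pull countably many existential blocks out in front, since $L_{\omega_1\omega}$ only permits finite quantifier blocks, and in general a countable conjunction of $\Sinf{\alpha+1}$ formulas is only $\Pinf{\alpha+2}$. Fortunately you never need this: the matrix of a $\Sinf{1}$ $\tau_{(\alpha)}$-formula is a finitary quantifier-free formula, so only \emph{finite} conjunctions of the substituted $\Pinf{\alpha}$ and $\Sinf{\alpha}$ pieces arise, and for finite conjunctions your ``pull out the existential block'' manipulation is valid and lands back in $\Sinf{\alpha+1}$. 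With that restriction the proof is correct.
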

\begin{proof}
  To obtain $\psi$ from $\phi$ simply replace each occurence of the relation 
  $R_i$ with the supporting formula of the $\Pinf{\alpha}$ type of $\bar a_i$.
  Clearly the resulting formula is $\Sinf{\alpha+1}$ and 
  $\A_{(\alpha)}\models \phi(\bar a)$ if and only if $\A\models\psi(\bar a)$.
\end{proof}
Combining everything we have proven about the structural $\alpha$-jump so far,
the following Corollary may not be very surprising. It is however quite useful
as we will see in \cref{sec:loandpa}.
\begin{corollary}\label{cor:alphajumpbiint}
  For all countable ordinals $\alpha$ and $\beta$, the following are
  equivalent.
  \begin{enumerate}
    \item $\A_{(\gamma)}$ is $\Dinf{1}$ bi-interpretable with $\B_{(\alpha)}$.
    \item\label{it:dalphaasym} $\A$ is infinitary bi-interpretable with $\B$ such that
      \begin{enumerate}
        \item\label{it:dalphaasym_ainb} the interpretation of $\A$ in $\B$ and $f_\B^\A\circ \tilde f_\A^\B$ are
      $\Dinf{\alpha+1}$ in $\B$,
        \item\label{it:dalphaasym_bina} the interpretation of $\B$ in $\A$ and
      $f_\A^\B\circ \tilde f_\B^\A$ are $\Dinf{\gamma+1}$ in $\A$,
    \item\label{it:dalphaasym_typesainb} for every $\bar a \in Dom_{\A}^{\B}$,
      $\{\bar c: (\A^\B,\bar c)\models\Pinf{\gamma}\tm tp^{\A^\B}(\bar a)\}$ is $\Dinf{\alpha+1}$ definable in
      $\B$,
        \item\label{it:dalphaasym_typesbina} for every $\bar b \in Dom_{\B}^{\A}$,
          $\{\bar c: (\B^\A,\bar c)\models\Pinf{\alpha}\tm tp^{\B^\A}(\bar b)\}$ is $\Dinf{\gamma+1}$ definable in
      $\A$.
      \end{enumerate}
  \end{enumerate}
\end{corollary}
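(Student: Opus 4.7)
The proof will be a translation argument based on Propositions \ref{prop:structjumpformulas} and \ref{prop:structjumpformdual}. The key technical observation, which I would state as an initial step, is that $\Dinf{1}$ definability in $\tau_{(\alpha)}$ over $\A_{(\alpha)}$ is equivalent to $\Dinf{\alpha+1}$ definability in $\tau$ over $\A$ (and analogously for the $\gamma$-jump of $\B$). One direction is \cref{prop:structjumpformdual}; the other direction for $\Sinf{1}\leftrightarrow\Sinf{\alpha+1}$ is \cref{prop:structjumpformulas}, and the $\Pinf{1}\leftrightarrow\Pinf{\alpha+1}$ case follows by substituting each atomic $R_i$ with its defining $\Pinf{\alpha}$ formula, noting that the resulting Boolean combination of $\Sinf{\alpha}$ and $\Pinf{\alpha}$ formulas, closed under universal quantifiers, is $\Pinf{\alpha+1}$.

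For the direction (1)$\Rightarrow$(2), I would start from a $\Dinf{1}$ bi-interpretation of $\A_{(\gamma)}$ and $\B_{(\alpha)}$, restrict its formulas to the sub-vocabulary $\tau$ of $\tau_{(\gamma)}$ (respectively $\tau_\B$ of $\tau_{(\alpha)}$), and translate via the observation above: the interpretation of $\A$ in $\B$ and the composition $f_\B^\A\circ\tilde f_\A^\B$, being $\Dinf{1}$ in $\B_{(\alpha)}$, become $\Dinf{\alpha+1}$ in $\B$, giving (a); condition (b) is symmetric. For (c), the $R_i$ relations of $\tau_{(\gamma)}$ are interpreted in $\B_{(\alpha)}$ by $\Dinf{1}$ formulas, hence by $\Dinf{\alpha+1}$ formulas in $\B$; by \cref{prop:pialphasupported} combined with \cref{def:strucjump}, the relation $R_i$ on $\A_{(\gamma)}$ picks out exactly $\{\bar c:\bar a_i\leq_\gamma \bar c\}=\{\bar c:(\A,\bar c)\models\Pinf{\gamma}\tm tp^\A(\bar a_i)\}$, which under the interpretation becomes the required subset of $B^{<\omega}$; (d) is symmetric.

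For (2)$\Rightarrow$(1), I would assemble a $\Dinf{1}$ bi-interpretation of $\A_{(\gamma)}$ and $\B_{(\alpha)}$ out of the data in (2). The domain, equivalence relation, and interpretations of the symbols of $\tau$ in the interpretation of $\A$ in $\B$ are $\Dinf{\alpha+1}$ in $\B$ by (a), hence $\Dinf{1}$ in $\B_{(\alpha)}$ by \cref{prop:structjumpformulas}. For the additional symbols $R_i$ of $\tau_{(\gamma)}$, condition (c) gives, for each $\bar a\in Dom_\A^\B$ representing the $\gamma$-back-and-forth class of $\bar a_i\in \A$, a $\Dinf{\alpha+1}$-in-$\B$ formula defining the appropriate set of tuples; these translate to $\Dinf{1}$ formulas in $\tau_{(\alpha)}$. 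The compositions are $\Dinf{\alpha+1}$ in $\B$ by (a), hence $\Dinf{1}$ in $\B_{(\alpha)}$. The same reasoning with $\alpha$ and $\gamma$ swapped handles the interpretation of $\B_{(\alpha)}$ in $\A_{(\gamma)}$.

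The main obstacle is a bookkeeping one rather than a conceptual one: matching the enumeration $(\bar a_i)$ used to define the new symbols of $\A_{(\gamma)}$ with the $\gamma$-back-and-forth classes of tuples in $Dom_\A^\B$, and checking that the interpreting function $f_\A^\B$ respects $\equiv_\gamma$ so that the translated definitions of the $R_i$ are independent of the representative chosen in each $\sim$-class. Once one notes that a bi-interpretation, being an isomorphism of quotient structures, automatically preserves back-and-forth relations, this matching is immediate and the corollary reduces to a diagram-chase with the two translation propositions.
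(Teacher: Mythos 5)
Your proposal is correct and follows essentially the same route as the paper: both directions are handled by translating $\Dinf{1}$ definability over the $\alpha$-jump into $\Dinf{\alpha+1}$ definability over the base structure (and back) via \cref{prop:structjumpformulas}, \cref{prop:structjumpformdual}, and \cref{cor:structjumpint}, with conditions (c) and (d) supplying exactly the definability of the $R_i$ symbols. Your added remarks on the $\Pinf{1}$ case and on the enumeration bookkeeping only make explicit details the paper leaves implicit.
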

\begin{proof}
  Assume that $\A_{(\gamma)}$ is $\Dinf{1}$ bi-interpretable with
  $\B_{(\alpha)}$.
  From \cref{cor:structjumpint} we get that $\B_{(\alpha)}$ is $\Dinf{\alpha+1}$ interpretable
  in $\B$ and hence if $\A_{(\gamma)}$ is $\Dinf{1}$ interpretable in
  $\B_{(\alpha)}$, then it is $\Dinf{\alpha+1}$ interpretable in $\B$. In
  particular, $\A$ is $\Dinf{\alpha+1}$ interpretable in $\B$.
  Similarly, as $f_\B^\A\circ\tilde{f}^\B_\A$ is $\Dinf{1}$ definable in
  $\B_{(\alpha)}$, it is $\Dinf{\alpha+1}$ in
  $\B$. Thus, we get \cref{it:dalphaasym_ainb} and
  \cref{it:dalphaasym_typesainb}. \cref{it:dalphaasym_bina} and
  \cref{it:dalphaasym_typesbina} follow by a symmetric argument.

  Assuming all the items in \cref{it:dalphaasym} we get that $\A_{(\gamma)}$ is
  $\Dinf{\alpha}$ interpretable in $\B$ and $\B_{(\alpha)}$ is $\Dinf{\gamma}$
  interpretable in $\A$. By \cref{prop:structjumpformulas} we get that
  $\A_{(\gamma)}$ is $\Dinf{1}$ interpretable in $\B_{(\alpha)}$ and vice
  versa, that $f_\B^\A\circ\tilde{f}^\B_\A$ is $\Dinf{1}$ definable in
  $\B_{(\alpha)}$ and that $f_\A^\B\circ \tilde{f}_\B^\A$ is $\Dinf{1}$
  definable in $\A_{(\gamma)}$. Thus, $\A_{(\gamma)}$ and $\B_{(\alpha)}$ are
  $\Dinf{1}$ bi-interpretable.
\end{proof}
Using \cref{lem:typesandbf} we get that the canonical structural $\omega$-jump of
models of Peano arithmetic has a model theoretic flavor.
\begin{proposition}
  For $\N \models\PA$, the structure $(\N, (S_n)_{n\in\omega})$, where
  $(S_n)_{n\in\omega}$ is a listing of the types realized in $\N$, is
  the canonical structural $\omega$-jump of $\N$.
\end{proposition}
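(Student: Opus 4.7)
The plan is to show that the two structures have, up to reindexing of predicates, the same relation on tuples, and that this follows immediately from \cref{lem:typesandbf}.

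First I would unpack both sides. By \cref{def:strucjump}, $\N_{(\omega)}$ is obtained from $\N$ by fixing an injective enumeration $(\bar a_i)_{i\in\omega}$ of representatives of the $\omega$-back-and-forth equivalence classes of tuples in $\N$, and adding a predicate $R_i$ interpreted as $R_i^{\N_{(\omega)}}=\{\bar b\in N^{<\omega}:\bar a_i\leq_\omega \bar b\}$. On the other side, $(\N,(S_n)_{n\in\omega})$ is obtained by adding predicates $S_n$ whose interpretations are the sets of realizations in $\N$ of an enumeration of the complete types realized in $\N$.

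Next I would invoke \cref{lem:typesandbf}: for all $\bar a,\bar b\in N^{<\omega}$, $\bar a\leq_\omega \bar b$ iff $tp^{\N}(\bar a)=tp^{\N}(\bar b)$. In particular $\leq_\omega$ is symmetric on $\N$ and its equivalence classes are precisely the sets of realizations of the complete types realized in $\N$. Thus the family of predicates $\{R_i^{\N_{(\omega)}}\}_{i\in\omega}$ coincides, as an unordered family of subsets of $N^{<\omega}$, with the family $\{S_n^{\N}\}_{n\in\omega}$, since both are the collection of (non-empty) type classes of tuples from $\N$. After composing with the bijection between the two index sets induced by matching these classes, the two structures are literally equal; and as noted after \cref{def:strucjump}, the canonical structural $\omega$-jump is only defined up to such a relabelling of predicates anyway.

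There is no real obstacle here: everything reduces to \cref{lem:typesandbf} together with the observation that the enumeration $(\bar a_i)_{i\in\omega}$ in \cref{def:strucjump} picks one representative from each non-empty equivalence class, which corresponds exactly to choosing one realized complete type at a time in the listing $(S_n)_{n\in\omega}$. The only small thing to mention is that the enumeration in the definition of $\N_{(\omega)}$ is over $\omega$, which is fine because there are only countably many types realized in a countable model.
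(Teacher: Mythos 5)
Your proof is correct and follows exactly the route the paper intends: the paper states this proposition without proof immediately after \cref{lem:typesandbf}, and the intended justification is precisely your observation that by that lemma the relations $R_i^{\N_{(\omega)}}=\{\bar b:\bar a_i\leq_\omega\bar b\}$ coincide with the realization sets of the complete types of the representatives $\bar a_i$. Your added remarks on the symmetry of $\leq_\omega$ in this setting and on matching the two enumerations are the right details to make this explicit.
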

At last we need to relate the Scott rank of a structure with the Scott rank of
its jump. For this we need to study how the back-and-forth relations interact.
\begin{proposition}\label{prop:bfstructjump}
  Let $\A$ be a structure and $\alpha,\beta<\omega_1$ where $\beta>0$.
  Then $(\A_{(\alpha)},\bar a)\leq_{\beta} (\A_{(\alpha)},\bar b)\LR (\A,\bar
a)\leq_{\alpha+\beta}
  (\A,\bar b)$. 
\end{proposition}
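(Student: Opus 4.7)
The plan is to prove both directions simultaneously by induction on $\beta\geq 1$, using Karp's theorem to translate each back-and-forth relation into preservation of $\Pinf{\cdot}$-types.

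For the base case $\beta=1$, I would combine \cref{prop:structjumpformulas} with its dual \cref{prop:structjumpformdual} and Karp's theorem. Taking contrapositives of these two results shows that a tuple satisfies the same $\Pinf{1}$ $\tau_{(\alpha)}$-formulas in $\A_{(\alpha)}$ as it satisfies $\Pinf{\alpha+1}$ $\tau$-formulas in $\A$. Therefore $(\A_{(\alpha)},\bar a)\leq_1 (\A_{(\alpha)},\bar b)$, i.e.\ every $\Pinf{1}$ $\tau_{(\alpha)}$-formula true of $\bar a$ in $\A_{(\alpha)}$ is true of $\bar b$, is equivalent to every $\Pinf{\alpha+1}$ $\tau$-formula true of $\bar a$ in $\A$ being true of $\bar b$, which is $(\A,\bar a)\leq_{\alpha+1}(\A,\bar b)$.

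For the inductive step $\beta\geq 2$, I would unfold the definition:
\[(\A_{(\alpha)},\bar a)\leq_\beta(\A_{(\alpha)},\bar b)\iff \forall \beta'<\beta\ \forall \bar d\ \exists \bar c\ (\A_{(\alpha)},\bar b\bar d)\leq_{\beta'}(\A_{(\alpha)},\bar a\bar c).\]
By the usual nestedness of the back-and-forth relations, the quantifier over $\beta'<\beta$ may be restricted to $1\leq \beta'<\beta$ without loss of generality (here we use $\beta\geq 2$). Applying the induction hypothesis then gives
\[(\A_{(\alpha)},\bar a)\leq_\beta(\A_{(\alpha)},\bar b)\iff \forall\,1\leq\beta'<\beta\ \forall \bar d\ \exists \bar c\ (\A,\bar b\bar d)\leq_{\alpha+\beta'}(\A,\bar a\bar c).\]
It remains to show this is equivalent to $(\A,\bar a)\leq_{\alpha+\beta}(\A,\bar b)$, i.e.\ to $\forall \gamma<\alpha+\beta\ \forall \bar d\ \exists \bar c\ (\A,\bar b\bar d)\leq_\gamma(\A,\bar a\bar c)$.

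The nontrivial direction is to argue that the set $\{\alpha+\beta':1\leq\beta'<\beta\}$ is cofinal in $\{\gamma:\gamma<\alpha+\beta\}$ in the sense that every such $\gamma$ satisfies $\gamma\leq \alpha+\beta'$ for some $1\leq \beta'<\beta$; once this is established, nestedness of the $\leq_\gamma$ relations completes both directions. This is a short ordinal-arithmetic check with two cases: if $\gamma\leq\alpha$ take $\beta'=1$ (using $\beta\geq 2$), and if $\gamma>\alpha$ use left cancellation of ordinal addition to write $\gamma=\alpha+\delta$ with $0<\delta<\beta$ and set $\beta'=\delta$. The only genuinely delicate point in the whole argument is this ordinal arithmetic together with the boundary case where nestedness is invoked to drop $\beta'=0$ from the quantification; everything else is a straightforward unfolding of definitions.
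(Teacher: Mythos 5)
Your proof is correct and follows essentially the same route as the paper's: transfinite induction on $\beta$ with all the real content in the base case $\beta=1$, which is dispatched using \cref{prop:structjumpformulas} and \cref{prop:structjumpformdual}. The only differences are cosmetic --- you run both directions of the base case syntactically through Karp's theorem (where the paper argues the direction from $(\A_{(\alpha)},\bar a)\leq_1(\A_{(\alpha)},\bar b)$ to $(\A,\bar a)\leq_{\alpha+1}(\A,\bar b)$ directly via the relations $R_i$ and supported types), and you write out the ordinal-arithmetic cofinality check for the successor and limit steps that the paper dismisses as trivial.
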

\begin{proof}
  The proposition is proven by transfinite induction on $\beta$.
  The successor and limit cases are actually trivial, and the only case that matters is the base case $\b=1$. 
  
  Say $(\A,\bar a)\leq_{\alpha+1}(\A,\bar b)$, then $\Sinf{\alpha+1}\tm tp^{\A}(\bar
  b)\subseteq \Sinf{\alpha+1}\tm tp^{\A}(\bar a)$. By
  \cref{prop:structjumpformulas,prop:structjumpformdual},
  $\Sinf{1}\tm tp^{\A_{(\alpha)}}(\bar b)\subseteq \Sinf{1}\tm tp^{\A_{(\alpha)}}(\bar a)$ and thus
  $(\A_{(\alpha)},\bar a)\leq_1 (\A_{(\alpha)},\bar b)$.

  On the other hand assume that $(\A_{(\alpha)},\bar
  a)\leq_1(\A_{(\alpha)},\bar b)$. We have that $(\A,\bar a)\leq_{\alpha+1}
  (\A,\bar b)$ if and only if for all $\bar d$ there is $\bar c$ such that
  $(\A,\bar b\bar d)\leq_\alpha (\A,\bar a\bar c)$. 
  Fix a tuple $\bar d$ and let $\bar a_i$ be such that $\bar
  b\bar d\equiv_\alpha \bar a_i$. Then $(\A_{(\alpha)},\bar b\bar d)\models
  R_i(\bar b\bar d)$, and in particular $(\A_{(\alpha)},\bar
  b)\models \exists \bar x R_i(\bar b\bar x)$. By assumption that $(\A_{(\alpha)},\bar
  a)\leq_1(\A_{(\alpha)},\bar b)$, also $(\A_{(\alpha)},\bar
  a)\models \exists \bar x R_i(\bar a\bar x)$. Pick a witness $\bar c_0$ for
  $\bar x$. Then $\Pinf{\alpha}\tm tp^\A(\bar a\bar c_0)\supseteq
  \Pinf{\alpha}\tm tp^\A(\bar b\bar d)$ and thus $(\A,\bar b\bar
  d)\leq_\alpha(\A,\bar a \bar c_0)$ as required.
\end{proof}
Assume that a tuple $\bar a$ from $\A$ is $(\alpha+\beta)$-free where $\beta\geq 1$. Then in
particular for all $\gamma<\beta$
\[ \forall \bar b \exists \bar a'\bar b' \left(\bar a\bar b\leq_{\alpha+\gamma}\bar
  a'\bar b'\land
\bar a\not\leq_{\alpha+\beta}\bar a'\right)\]
and hence by \cref{prop:bfstructjump}, \[(\forall \gamma<\beta )\forall \bar b\bar b\exists\bar a'\bar b'
\left((\A_{(\alpha)},\bar a\bar b)\leq_{\gamma} (\A_{(\alpha)},\bar a'\bar b') \land
(\A_{(\alpha)},\bar a)\not\leq_\beta (\A_{(\alpha)},\bar a')\right).\]
So, $\bar a$ is $\beta$-free in $\A_{(\alpha)}$. That $\bar a$ being $\beta$-free
in $\A_{(\alpha)}$ implies that $\bar b$ is $(\alpha+\beta)$-free in $\A$ follows
from the nestedness of the back-and-forth relations. That is, if for some $\bar
a, \bar b$, and $\beta$, $\bar a\leq_\beta \bar b$, then for all $\gamma<\beta$,
$\bar a\leq_\gamma \bar b$. Hence, we get the following from
\cref{it:alphafree} in \cref{thm:robusterSR}.
\begin{corollary}\label{cor:srjump}
  For any structure $\A$ and non-zero $\alpha,\beta<\omega_1$,
  $SR(\A)=\alpha+\beta$ if and only if $SR(\A_{(\alpha)})=\beta$.
\end{corollary}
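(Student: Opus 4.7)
The plan is to combine the tuple-level equivalence proved in the paragraph preceding the corollary---namely that $\bar a$ is $(\alpha+\beta)$-free in $\A$ if and only if $\bar a$ is $\beta$-free in $\A_{(\alpha)}$, for every $\beta \geq 1$---with \cref{it:alphafree} of \cref{thm:robusterSR}, which characterizes the Scott rank of a structure as the least ordinal $\gamma$ such that no tuple is $\gamma$-free.

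To establish the tuple-level equivalence I would argue both directions using \cref{prop:bfstructjump}. \emph{Forward:} given $(\alpha+\beta)$-freeness of $\bar a$ in $\A$, for each $\delta$ with $1 \leq \delta < \beta$ and each $\bar b$, take a witness $\bar a'\bar b'$ satisfying $\bar a\bar b \leq_{\alpha+\delta} \bar a'\bar b'$ and $\bar a \not\leq_{\alpha+\beta} \bar a'$; \cref{prop:bfstructjump} translates both conditions to the corresponding $\delta$-clause of $\beta$-freeness in $\A_{(\alpha)}$. The $\delta = 0$ clause is then obtained from the $\delta = 1$ witness via the nestedness $\leq_1 \Rightarrow \leq_0$ in $\A_{(\alpha)}$. \emph{Backward:} given $\beta$-freeness of $\bar a$ in $\A_{(\alpha)}$, translate the $\delta \geq 1$ witnesses back via \cref{prop:bfstructjump} to obtain the $\gamma = \alpha + \delta$ clauses of $(\alpha+\beta)$-freeness in $\A$; for $\gamma \leq \alpha$, use the $\delta = 0$ witness and note that $(\A_{(\alpha)}, \bar a\bar b) \leq_0 (\A_{(\alpha)}, \bar a'\bar b')$ forces $\bar a\bar b \equiv_\alpha \bar a'\bar b'$ in $\A$, whence $\bar a\bar b \leq_\gamma \bar a'\bar b'$ for every $\gamma \leq \alpha$ by nestedness.

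Once the tuple-level equivalence is in hand, combining it with \cref{it:alphafree} of \cref{thm:robusterSR} yields, for each non-zero $\beta$, the equivalence $SR(\A) \leq \alpha+\beta \iff SR(\A_{(\alpha)}) \leq \beta$, and varying $\beta$ gives the claimed equivalence $SR(\A) = \alpha+\beta \iff SR(\A_{(\alpha)}) = \beta$ for non-zero $\alpha, \beta$. The main technical subtlety lies in the forward direction of the tuple-level equivalence when $\beta = 1$: here the $\delta = 0$ clause cannot be absorbed by a $\delta = 1$ clause via nestedness, since no such clause exists, so this edge case requires a separate argument---for instance the syntactic route afforded by \cref{prop:structjumpformulas,prop:structjumpformdual} combined with the observation that $\operatorname{Aut}(\A) = \operatorname{Aut}(\A_{(\alpha)})$, which identifies $\Sinf{\alpha+1}$-definable orbits in $\A$ with $\Sinf{1}$-definable orbits in $\A_{(\alpha)}$.
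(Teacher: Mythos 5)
Your proposal is correct and follows essentially the same route as the paper: transfer $(\alpha+\beta)$-freeness of a tuple in $\A$ to $\beta$-freeness in $\A_{(\alpha)}$ via \cref{prop:bfstructjump} and then apply \cref{it:alphafree} of \cref{thm:robusterSR}. In fact you are more careful than the paper's own terse argument about the $\gamma=0$ clause and the $\beta=1$ edge case, where \cref{prop:bfstructjump} (which requires $\beta>0$) does not literally apply; your patches --- nestedness for $\beta\geq 2$, and for $\beta=1$ the syntactic correspondence of \cref{prop:structjumpformulas,prop:structjumpformdual} together with $\operatorname{Aut}(\A)=\operatorname{Aut}(\A_{(\alpha)})$ --- are sound ways to close that gap.
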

\section{Reducing linear orderings to models of $\PA$}\label{sec:loandpa}
The goal of this section is to complete the proof of \cref{mainthm} by showing
that for every completion $T$ of $\PA$ and every countable ordinal $\alpha$ bigger than
$\omega$, $\alpha$ is the Scott rank of a model of $T$. The main missing part
is the following theorem which is the main result of this section.
\begin{theorem}\label{thm:loandpa}
 For any completion $T$ of $\PA$, the class of linear orderings is
 reducible via $\Dinf{1}$ bi-interpretability to the canonical structural
 $\omega$-jumps of models of $T$.
\end{theorem}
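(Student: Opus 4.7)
The plan is to adapt Gaifman's construction of end-extensions of models of $\PA$ indexed by linear orderings. Fix a completion $T$ of $\PA$ with prime model $\P$, and a definable unbounded rare minimal type $p$ in $T$ (a Gaifman-style type). For each countable linear ordering $\L$, build the model $\N_\L \models T$ as the Skolem hull of $\P$ together with an $\L$-indexed set of indiscernibles $I_\L = \{a_\ell : \ell \in L\}$ realizing $p$, arranged so that $a_\ell <^{\N_\L} a_{\ell'}$ iff $\ell <_\L \ell'$. Every element of $\N_\L$ then has a canonical presentation $t_e(a_{\ell_1},\dots,a_{\ell_{k_e}})$ for some $e \in \omega$ and strictly $<_\L$-increasing $\bar\ell$, and by the rarity of $p$ the only realizations of $p$ in $\N_\L$ are the indiscernibles in $I_\L$.

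For the $\Dinf{1}$ interpretation of $\L$ in $(\N_\L)_{(\omega)}$, let $R_p$ denote the unary predicate of $\tau_{(\omega)}$ corresponding to the $\omega$-back-and-forth class of a single realization of $p$ — by \cref{lem:typesandbf}, this class is exactly the $T$-type of $p$, so $R_p^{(\N_\L)_{(\omega)}} = I_\L$. Declare $\mathrm{Dom}_\L = R_p$, $\sim$ the identity, and the interpreted order the restriction of the arithmetic $<$. All defining formulas are atomic in $\tau_{(\omega)}$, hence trivially $\Dinf{1}$.

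For the $\Dinf{1}$ interpretation of $(\N_\L)_{(\omega)}$ in $\L$, encode each element $x \in N_\L$ by a tuple from $L^{<\omega}$ whose equality pattern packages an index $e$ of a Skolem term and whose underlying strictly increasing segment $\bar\ell$ provides the indiscernibles, so that the tuple represents $t_e(a_{\ell_1}, \dots, a_{\ell_{k_e}})$. Since $I_\L$ is a sequence of indiscernibles over $\P$ and $T$ is complete, whether two such codings represent the same element, whether any arithmetic relation holds among elements, and whether a given type-predicate $R_i$ holds on an element all depend only on the order type of the underlying tuples in $\L$ together with fixed data independent of $\L$. Each defining relation is therefore an infinitary disjunction of quantifier-free $\L$-formulas indexed by a decidable set of finite order types, so both $\Sinf{1}$ and $\Pinf{1}$ in $\L$.

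The two compositions act essentially as the identity on the relevant representative tuples (an element $\ell \in L$ is sent to the code for $a_\ell$ and back to $\ell$; an element $x = t_e(\bar a) \in \N_\L$ is sent to its canonical tuple-code whose underlying segment lies in $I_\L$, and then back to $x$), so their $\Dinf{1}$-definability reduces to the analysis already carried out. The main obstacle is confirming that Gaifman's construction can be performed so that the realizations of $p$ in $\N_\L$ coincide with $I_\L$, which guarantees that $R_p$ isolates exactly the indiscernibles in the $\omega$-jump; this should follow from the standard minimal-rare-type analysis, since any Skolem term $t_e(\bar a)$ realizing $p$ must, by rarity and indiscernibility, already equal one of the $a_{\ell_j}$. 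A secondary technical point is fixing, uniformly in $\L$, an enumeration of the types realized in $\N_\L$ so that the index of $p$ among the predicates $R_i$ — and thus the defining formulas of the bi-interpretation — is the same across all $\L$.
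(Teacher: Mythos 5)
Your proposal is correct and follows essentially the same route as the paper: Gaifman's $\L$-canonical extension via a rare minimal type $p$, recovering $\L$ inside the $\omega$-jump as the set of realizations of $p$ (which rarity shows are exactly the generators), and interpreting $\N_\L$ in $\L$ by Skolem-term codes over increasing tuples of generators, with indiscernibility and completeness of $T$ giving the $\Dinf{1}$ bounds. The two points you defer --- that the realizations of $p$ coincide with the generators, and the uniformity of the enumeration of realized types across all $\L$ --- are precisely the points the paper settles by the gap/rarity analysis and by fixing the type and Skolem-term enumeration recursively in $T$.
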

To prove \cref{thm:loandpa} we use a classical construction of
Gaifman~\cite{gaifman1976}. The following result is a special case of one
of his results, see~\cite[Section 3.3]{kossak2006} for more details.
\begin{theorem}[cf. \cite{gaifman1976}]\label{thm:gaifman}
  For every completion $T$ of $\PA$ and every linear order $\L$, there is
  a model
  $\N_\L\models T$ such that the automorphism groups of $\L$ and $\N_\L$
  are isomorphic. 
\end{theorem}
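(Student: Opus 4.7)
The plan is to follow Gaifman's classical construction, which builds $\N_\L$ as the Skolem hull of an $\L$-indexed sequence of indiscernibles realizing a rigid type over the prime model. I sketch how the steps would go since the exact shape of $\N_\L$ will matter for the bi-interpretability result aimed at in this section.

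The first step produces a \emph{minimal} definable unbounded complete type $p(x)$ over the prime model $\N_0$ of $T$. Here minimality means that whenever $c_1 < c_2$ both realize $p$ over $\N_0$ in a common elementary extension, every element of the Skolem hull of $\N_0\cup \{c_1,c_2\}$ that lies below $c_2$ already lies in the Skolem hull of $\N_0 \cup \{c_1\}$. Gaifman's main technical lemma asserts that such a type exists for every completion of $\PA$; one produces it by an $\omega$-step diagonal construction over an enumeration of all Skolem terms, at each stage adding to $p$ a formula that forces the next potentially problematic term to behave rigidly along realizations of $p$.

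The second step builds the model. Given a countable linear order $\L$, one realizes in a sufficiently saturated elementary extension of $\N_0$ a sequence $(c_\ell)_{\ell \in \L}$ with $c_\ell$ realizing $p$ over the Skolem hull of $\N_0 \cup \{c_{\ell'} : \ell' <_\L \ell\}$, and with $c_{\ell_1} < c_{\ell_2}$ whenever $\ell_1 <_\L \ell_2$. Define $\N_\L$ to be the Skolem hull of $\N_0 \cup \{c_\ell : \ell \in \L\}$; this is an elementary substructure of the ambient extension, so $\N_\L \models T$. Minimality of $p$ then gives two essential features: the sequence $(c_\ell)$ is indiscernible over $\N_0$ in the sense that any order-preserving bijection between finite sub-tuples is elementary, and each $c_\ell$ is the unique element of $\N_\L$ realizing $p$ strictly above every $c_{\ell'}$ with $\ell' <_\L \ell$ and strictly below every later $c_{\ell''}$.

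The third step identifies the automorphism groups and exposes the only real obstacle. Every Skolem constant is fixed by every automorphism, so $\N_0$ is pointwise fixed. By the characterization of the $c_\ell$ above, any automorphism of $\N_\L$ permutes the indiscernibles in an $\L$-order-preserving manner, inducing a homomorphism $\mathrm{Aut}(\N_\L)\to \mathrm{Aut}(\L)$; conversely, indiscernibility allows any element of $\mathrm{Aut}(\L)$ to be lifted by permuting the $c_\ell$ and extending through Skolem terms. Since every element of $\N_\L$ is a Skolem term in $\N_0$ together with finitely many indiscernibles, these homomorphisms are mutually inverse, yielding $\mathrm{Aut}(\N_\L)\cong \mathrm{Aut}(\L)$. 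The main obstacle throughout is the existence of the minimal type used in the first step: its construction is delicate and is Gaifman's central contribution; once it is in hand, both the construction of $\N_\L$ and the computation of its automorphism group follow from minimality and indiscernibility by essentially formal considerations.
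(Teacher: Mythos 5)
Your proposal follows exactly the route the paper takes: this theorem is attributed to Gaifman, and the paper's own treatment (the subsection on the $\L$-canonical extension, together with \cite[Section 3.3]{kossak2006}) is precisely the construction you sketch --- a minimal unbounded definable type $p(x)$ over the prime model, the Skolem hull of an $\L$-indexed sequence of realizations of $p$, and the identification of $\mathrm{Aut}(\N_\L)$ with $\mathrm{Aut}(\L)$ via the fact that the realizations of $p$ in $\N_\L$ are exactly the generators. The only cosmetic difference is that you package ``minimal'' via a Skolem-hull condition rather than the paper's characterization as ``unbounded and indiscernible,'' but both defer the substantive work (existence of such a type, via Ramsey's theorem, and the rarity/gap analysis) to Gaifman, as the paper itself does.
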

In light of \cref{thm:autgroupsiso} this result suggests that there is an
infinitary bi-interpretation between $\L$ and $\N_\L$. Coskey and Kossak~\cite{coskey2010} used Gaifman's
construction to obtain a Borel reduction from linear orderings to models of $T$
and thus obtained that the isomorphism relation on the models of $T$ is Borel
complete. Analyzing Gaifman's construction we will see that this $\L$ is $\Dinf{1}$ bi-interpretable with the
canonical structural $\omega$-jump of $\N_\L$. 

\subsection{Gaifman's $\L$-canonical extension}
This section follows Gaifman's construction. We will outline his proof and
refer the reader to~\cite[Section 3.3]{kossak2006} for details.
We will work with a fixed completion $T$ of $\PA$. The main ingredient of Gaifman's construction are \emph{minimal types}.
A type $p(x)$ is
\emph{minimal} if and only if $p(x)$ is
\begin{enumerate}
\tightlist    
\item \emph{unbounded}: $(t<x)\in p(x)$ for every Skolem constant, and
\item \emph{indiscernible}: for every model $\M$, and all increasing sequences
  of elements $\bar a, \bar b$ in $M$ of the same length with each element
  realizing $p(x)$, $tp^\M(\bar a)=tp^\M(\bar b)$.
\end{enumerate}
This is not Gaifman's original definition, rather a convenient
characterization. The original definition was that a type $p(x)$ is
\emph{minimal} if it is unbounded and whenever $\M\models T$ and $\M(a)$ is
a $p(x)$-extension of $\M$, then there is no $\N$ such that $\M\prec\N\prec
\M(a)$. See~\cite[Section 3.2]{kossak2006} for this and other characterizations
of minimal types.

One other property of minimal types is that they are \emph{definable} in the
sense of stable model theory. That is, for every
formula $\phi(u,v)$, there is a formula $\sigma_\phi(u)$ such that for all
Skolem constants, $\phi(t,v)\in p(x) \Leftrightarrow T\vdash
\sigma_\phi(t)$.
Gaifman used these types to build $\L$-canonical extensions of models $\M$
given a linear order $\L$. We will build the $\L$-canonical extension of the
prime model $\N$, denoted by $\N_\L$. We will fix a minimal type $p(x)$ not
realized in $\N$. Gaifman~\cite[Theorem 3.1.4]{kossak2006} used Ramsey's
theorem to show that minimal types exist.
Let us point out though that one can find a minimal type recursive in
$T$~\cite[Remark below Theorem 3.1.4]{kossak2006}.

Let $p(x), q(y)$ be two definable types. Then we can
define the product $p(x)\times q(y)$ to be the type $r(x,y)$ of $(a,b)$ in $\M(a)(b)$
where $\M(a)$ is a $p(x)$-extension of $\M$ and $\M(a)(b)$ is a
$q(y)$-extension of $\M(a)$. Definability of $q(x)$ guarantees uniqueness of
$r(x,y)$, as $\phi(x,y)\in r(x,y)$ if and only if $\exists x \phi(x,y)\in q(y)$
and $\sigma_\phi(x)\in p(x)$. Furthermore, if $\M(a_1,\dots,a_n)$ is
a $p_1(x_1)\times\dots\times p_n(x_n)$-extension and $\M(b_1,\dots b_k)$ is a $p_{i_1}(x_{i_1})\times\dots \times p_{i_k}(x_{i_k})$-extension with
all $i_k$ disjoint and less than $n$, then there is a unique elementary
embedding that is the identity on $\M$ and takes $b_j$ to $a_{i_j}$. This
allows us to iterate $p(x)$-extensions. Given a linear ordering $\L$, associate
with every $l\in L$ a variable $x_l$ and a minimal type $p(x_l)$ (where
$p(x_{l_1})$ and $p(x_{l_2})$ only differ in the free variable). Fix the prime
model $\N$ of $T$ and let 
$\N_\L$ be the structure obtained by taking the direct limit of all
$p(x_{l_1})\times\dots \times p(x_{l_k})$-extensions of $\N$ for every ascending
sequence $l_1<\dots<l_k$ in $\L$. The so obtained structure $\N_\L$ is commonly
referred to as an $\L$-canonical extension of $\N$. We will refer to the
elements of $\L$ and their corresponding elements in $\N_\L$ as the generators
of $\N_\L$.

\subsection{Interpreting $\L$ in $\N_\L$}
  The prime model $\N$ of any completion $T$ of $\PA$ has a copy whose elementary
  diagram -- the set $\bigoplus_{i,j\in\omega} \{\langle a_1,\dots,a_j
  \rangle: \N\models \phi^j_i(\bar a_1,\dots,\bar a_j) \}$ where $\phi_i^j$ is
  the $i$th formula in the language of arithmetic with $j$ free variables -- is
  $T$-computable. We say that such $\N$ is $T$-decidable. 

  Something similar can be observed for the models $\N_\L$. Fix an enumeration
  $(s_i)_{i\in\omega}$ of the Skolem terms of $T$. Let
$Var=\{x_i:i\in L\}$, and, given $\L$, let $\lambda:Var^{<\omega} \to
Var^{<\omega}$ be the function taking tuples of variables and outputting them
such that their indices form an $\L$-ascending sequence. The canonical copy
  of $\N_\L$ is given by the quotient of the Skolem terms with parameters the
  generators under the equivalence $\sim$ given by
  \[s_m(\bar x)\sim s_n(\bar x') \LR
    \left(\prod_{i<|\bar x|+|\bar x'|}p(\lambda(\bar x^\smallfrown\bar x')_{i})
  \right)\models s_m(\bar x)=s_n(\bar x'). \]
For any first order formula $\phi$ and elements $s_1(\bar x_1),\dots, s_m(\bar
x_m)$ in $\N_\L$ we have that 
\begin{multline*} \N_\L \models \phi(s_1(\bar x_1),\dots, s_m(\bar x_m)) \\\LR \left(\prod_{i<|\bar
  x_1|+\dots+ |\bar x_m|} p(\lambda({\bar x_1}^\smallfrown\dots^\smallfrown \bar
x_m)_i)\right)\models  \phi(s_1(\bar x_1),\dots , s_m(\bar x_m)).\end{multline*}

  We will interpret this canonical presentation of $\N_\L$ in $\L$ using
  relations in $\omega\times \N_\L^{<\omega}$.\footnote{We use $\omega\times
  \N_\L^{<\omega}$ instead of $\N_\L^{<\omega}$ for conceptual reasons. Given
  $R\subseteq \omega\times \N_\L^{<\omega}$ we can effectively pass to
  a relation $R'\subseteq \N_\L^{<\omega}$ and vice versa by letting
  $R'=\{\underbrace{a \dots a}_{n \text{ times}} b\bar c: a,b\in \N_\L, (n,\bar
c)\in R\}$.}
We let $Dom_{\N_\L}^\L=L\cup \{\langle n,l_1,\dots l_m\rangle: s_n(x_{l_1}\dots
x_{l_m})\in \N_\L\}$ and the relation symbols and $\sim$ to be interpreted in
the obvious way from $\N_\L$. It is not immediate that the so defined
relations are $\Dinf{1}$-definable in $\L$. We will use the relativization of
a result obtained by

 Ash, Knight, Manasse, Slaman~\cite{ash1989}, and, independently,
Chisholm~\cite{chisholm1990}. A proof for the version that we are using can be
found in~\cite{montalban2021a}.
\begin{lemma}\label{lem:urice}[\cite{montalban2021a}, cf.
  \cite{ash1989,chisholm1990}]
  Let $\A$ be a structure, $R\leq \omega \times A^{<\omega}$ a relation on $\A$
  and $X\subseteq\omega$. The
  following are equivalent:
  \begin{enumerate}
    \tightlist
  \item $R$ is uniformly relatively intrinsically $X$-c.e., that is in every copy
    $\hat \A\cong \A$, the relation $R^{\hat \A}$ is $X$-c.e.\ in $\hat \A$,
    uniformly in the copies.
  \item $R$ is definable by an $X$-computable $\Sinf{1}$ formula in the language of
    $\A$.
\end{enumerate}
\end{lemma}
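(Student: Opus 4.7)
The plan is to prove the two directions separately. Direction (2)$\Rightarrow$(1) is routine: given an $X$-computable $\Sinf{1}$ formula $\bigvvee_{k} \exists \bar y\, \psi_{n,k}(\bar x, \bar y)$ with the map $(n,k)\mapsto \psi_{n,k}$ being $X$-computable, I would enumerate $R^{\hat \A}$ in any copy $\hat \A$ by searching over indices $k$, tuples $\bar x$, and witnesses $\bar y$, and checking the quantifier-free formula $\psi_{n,k}$ against the atomic diagram of $\hat \A$. This procedure is uniform in $\hat \A$ and in $n$.

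The substantive direction is (1)$\Rightarrow$(2). Fix a Turing functional $\Phi$ such that, for every copy $\hat \A$ of $\A$, $R^{\hat \A}$ is the set enumerated by $\Phi^{X \oplus D(\hat \A)}$. The key idea is to replace global oracle information by purely syntactic finitary forcing conditions. For each $n \in \omega$, let $F_n$ be the collection of finite quantifier-free $\tau$-formulas $\psi(\bar x, \bar y)$, written as conjunctions of literals in distinct variables $\bar x$, $\bar y$, with the property that $\Phi$ relativized to $X$ and to the partial oracle encoded by $\psi$ (interpreting $\bar x, \bar y$ as names for distinct elements) halts and enumerates $\langle n, \bar x \rangle$ using only queries whose answers are determined by $\psi$. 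The set $F_n$ is $X$-c.e.\ uniformly in $n$, so
\[
\phi_n(\bar x) \;=\; \bigvvee_{\psi \in F_n} \exists \bar y\, \psi(\bar x, \bar y)
\]
is an $X$-computable $\Sinf{1}$ formula, uniformly in $n$.

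For soundness, if $\hat\A \models \phi_n(\bar a)$ is witnessed by some $\psi \in F_n$ and some $\bar b$, then $\psi(\bar a, \bar b)$ holds in $\hat \A$, so the atomic queries of the forcing computation are answered consistently with $D(\hat \A)$. Thus $\Phi^{X \oplus D(\hat \A)}$ enumerates $\langle n, \bar a \rangle$, and by hypothesis $(n, \bar a) \in R^{\hat \A}$. For completeness, if $(n, \bar a) \in R^{\hat\A}$ then $\Phi^{X \oplus D(\hat\A)}$ enumerates $\langle n, \bar a \rangle$ after finitely many steps during which only finitely many atomic queries are made; letting $\bar c$ be the further elements involved and $\psi$ the conjunction of literals recording those answers (with $\bar x, \bar y$ naming $\bar a, \bar c$), one has $\psi \in F_n$ and $\hat\A \models \exists \bar y\, \psi(\bar a, \bar y)$.

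The main point to verify, and the principal obstacle, is that the notion of ``forcing'' must be uniform, depending only on $\Phi$ and $X$ and not on any particular structure, so that one and the same formula works across all copies. This is essentially built in, since halting of $\Phi$ on a finite partial oracle depends only on $\Phi$ and $X$. The remaining technical bookkeeping concerns translating between finite sub-diagrams (which involve concrete elements) and quantifier-free formulas with free variables (which must name distinct elements), handling equality and distinctness atoms, and ensuring that witnesses for $\bar y$ may be chosen outside $\bar a$; these are routine in the Ash--Knight--Manasse--Slaman / Chisholm framework.
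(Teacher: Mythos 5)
First, a point of orientation: the paper does not prove this lemma at all --- it is quoted from the literature (cf.\ \cite{ash1989,chisholm1990}, with the uniform relativized version attributed to \cite{montalban2021a}) --- so there is no in-paper argument to compare against. Your overall architecture is the standard one for this result: a routine enumeration argument for (2)$\Rightarrow$(1), and for (1)$\Rightarrow$(2) a disjunction, over finite quantifier-free ``forcing conditions'' that make $\Phi$ halt, of existentially quantified formulas.

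There is, however, a genuine gap in your soundness step, and it sits exactly where the content of the theorem lies. You argue: if $\psi\in F_n$ and $\hat\A\models\psi(\bar a,\bar b)$, then ``the atomic queries of the forcing computation are answered consistently with $D(\hat \A)$, thus $\Phi^{X\oplus D(\hat\A)}$ enumerates $\langle n,\bar a\rangle$.'' This does not follow. The oracle $D(\hat\A)$ is a fixed coding of the atomic diagram in which the bit positions queried by $\Phi$ depend on which concrete natural numbers the elements $\bar a,\bar b$ are; the halting computation in the definition of $F_n$ was run with $\bar x,\bar y$ standing for some fixed placement of elements (in effect, an initial segment of $\omega$). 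If $\bar a\bar b$ does not occupy that placement in $\hat\A$, then $\Phi^{X\oplus D(\hat\A)}$ queries entirely different bits and need not converge, let alone output $\langle n,\bar a\rangle$. The correct step is to pass to a rearranged copy $\hat\A'$ in which the images of $\bar a\bar b$ occupy the positions assumed by the condition, apply the hypothesis that $\Phi$ correctly enumerates $R$ on \emph{that} copy, and pull the conclusion back via the invariance of $R$ under isomorphism; a symmetric rearrangement is needed in the completeness direction, where the use of a halting computation determines the diagram on an initial segment of the domain rather than on the particular tuple $\bar a\bar c$. As written, your argument invokes the hypothesis only for the single copy $\hat\A$ at hand and never essentially uses the quantification over all copies --- but the lemma is false without that quantification (there are intrinsically c.e.\ relations that are not relatively intrinsically c.e.), so any correct proof must use it, and this rearrangement is precisely where. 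This is more than the ``routine bookkeeping'' you defer at the end; it is the one step that cannot be waved away.
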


Using this lemma one easily sees that the elementary diagram of $\N_\L$ is $\Dinf{1}$ interpretable in $\L$.

In order to prove \cref{thm:loandpa} we want to use
\cref{it:dalphaasym} of \cref{cor:alphajumpbiint}. Towards that we show that the
$\Pinf{\omega}$ types of tuples in $\N_\L$ are $\Dinf{1}$ definable in $\L$. By
\cref{lem:typesandbf} it is sufficient to show that all first order
types realized in $\N_\L$ are $\Dinf{1}$ definable in $\L$. Since the full first order structure of $\N_\L$
is $\Dinf{1}$ interpretable in $\L$, the sets $\{ \bar b: \bar b\models
tp^{{\N_\L}^\L}( \bar a)\}$ for $\bar a\in \N_\L$ are clearly $\Pinf{1}$ definable
in $\L$. We will show that they are also $\Sinf{1}$
definable.
\begin{lemma}
  For every $\bar a\in (Dom_{\N_\L}^\L)^{<\omega}$, $\{ \bar b: \bar b\models
tp^{{\N_\L}^\L}( \bar a)\}$ is
  $\Sinf{1}$-definable in $\L$.
\end{lemma}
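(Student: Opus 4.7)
The plan is to exploit the specific form that elements of $\N_\L$ take as Skolem terms applied to generators. Given a tuple $\bar{a} = (a_1, \ldots, a_k) \in (Dom_{\N_\L}^\L)^{<\omega}$, each component $a_i$ is either an element of $L$ or a code $\langle n_i, l_{i,1}, \ldots, l_{i,m_i}\rangle$ representing $s_{n_i}(x_{l_{i,1}}, \ldots, x_{l_{i,m_i}})$. Let $g_1 < g_2 < \cdots < g_r$ list the distinct elements of $L$ appearing anywhere in $\bar a$ in ascending order, so that $\bar a = \bar s(g_1, \ldots, g_r)$ for a fixed tuple of Skolem terms $\bar s$ with a fixed pattern of variables. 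For any $\bar y = (y_1, \ldots, y_r)$, let $\bar s(\bar y)$ denote the codewise substitution of $g_i$ by $y_i$.

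I propose the formula
\[
\phi_{\bar a}(\bar z) \;:=\; \exists y_1, \ldots, y_r \in L\ \Bigl( y_1 < \cdots < y_r \ \wedge\ \bar z \sim \bar s(y_1, \ldots, y_r)\Bigr)
\]
as the candidate $\Sinf{1}$-definition. It is manifestly $\Sinf{1}$ in $\L$ through the interpretation: the existential quantifier ranges over $L$, the order is atomic in $\L$, and $\sim$ is $\Dinf{1}$-definable on $Dom_{\N_\L}^\L$ by construction.

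What remains is to verify $\bar b \models tp^{{\N_\L}^\L}(\bar a) \iff \phi_{\bar a}(\bar b)$. The $(\Leftarrow)$ direction is a direct consequence of indiscernibility of the minimal type $p$: for any ascending $r$-tuple $\bar h = (h_1, \ldots, h_r)$ in $L$, the tuple $\bar h$ realizes the product type $p^r$ in $\N_\L$, just as $\bar g$ does, so applying the fixed Skolem template $\bar s$ yields $\bar s(\bar h) \equiv \bar s(\bar g) = \bar a$; hence $\bar b \sim \bar s(\bar h)$ forces $\bar b \equiv \bar a$.

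The $(\Rightarrow)$ direction is the substantive part and is where I expect the main difficulty. Given $\bar b$ with $\bar b \equiv \bar a$, one must produce ascending generators $\bar h$ in $L$ such that $\bar b \sim \bar s(\bar h)$. The plan is to use that (i) every element of $\N_\L$ is a Skolem term in finitely many generators, so $\bar b$ admits some representation $\bar t(\bar h')$, and (ii) minimality of $p$---in the sense that $p$-extensions admit no intermediate models---forces the realizations of $p$ in $\N_\L$ to coincide with the generators indexed by $L$, whence the ``generator degree'' of a tuple and the combinatorial shape of its Skolem template can be recovered from its first-order type. Since $\bar a$ uses exactly $r$ ascending generators in the pattern encoded by $\bar s$, so must $\bar b$ (up to $\sim$), yielding the desired witnesses $\bar h$. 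The technical heart will be making this pattern-uniqueness claim precise from the properties of minimal types reviewed earlier in the section.
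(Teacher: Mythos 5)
There is a genuine gap, and you have located it yourself: the $(\Rightarrow)$ direction of your biconditional is never proved, only planned. Your formula commits to the \emph{single} Skolem template $\bar s$ extracted from the given representation of $\bar a$, so you are forced to show that \emph{every} realization of $tp^{{\N_\L}^\L}(\bar a)$ is $\sim$-equivalent to an instance $\bar s(\bar h)$ of that same template at some ascending tuple of generators. That is a pattern-rigidity statement about Gaifman's construction which does not follow from the two facts you cite: (i) that every element is a Skolem term in generators gives $\bar b = \bar t(\bar h')$ for \emph{some} template $\bar t$, with no control relating $\bar t$ to $\bar s$; and (ii) that the realizations of $p$ are exactly the generators says nothing about how an arbitrary element's first-order type constrains the shape of its (in general non-unique) term representations. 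Even in simple cases (e.g.\ $a = \lfloor l/2\rfloor$, or terms in several generators) recovering a representation in a prescribed template from the type alone requires a separate argument, and you give none.

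The paper avoids this entirely by not insisting on one template: since $\Sinf{1}$ formulas admit countable disjunctions, it defines the type-set as $\bigvvee_{i\in I}\exists \bar x\, (\bar z\sim s_i(\bar x))$, where $I$ enumerates \emph{all} Skolem templates some ascending-generator instance of which realizes $tp(\bar a)$. With that formula, your hard direction becomes trivial ($\bar b$ has \emph{some} representation $\bar t(\bar h')$, and $\bar t$ is then automatically one of the disjuncts), and the only content is your easy direction — the indiscernibility argument showing that if one ascending instance of a template realizes the type then all do (the paper's Claim 4.6.1, proved exactly as in your $(\Leftarrow)$ step, after a reduction to $1$-types via coding of tuples). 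So the fix is not to prove your rigidity claim but to widen the disjunction; as written, your proof is incomplete at its central step.
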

\begin{proof}
  First note that if $\bar a$ is an $n$-tuple of generators of $\N_\L$, then
  $tp^{{\N_\L}^{\L}}(\bar a)=\prod_{i<n} p(x_i)$ and the elements satisfying this type
  is the $\sim$-closure of the generators of length $n$ from $\L$. It is therefore trivially
  $\Sinf{1}$ definable in $\L$. Every other element is a Skolem term with
  generators as parameters. We will view these elements as such. Recall that
  in a model of $\PA$ we can represent every finite sequence of elements by a single element, its
  code. Given $\bar a$, recall that its code is the element $c(\bar a)=\sum_{i<|a|}
  2^{\langle i,a_i\rangle}$. It is then not hard to see that $\bar b\models
  tp(\bar a)$ if and only if $c(\bar b)\models tp(c(\bar a))$. Thus, it is sufficient
  to establish the lemma for $1$-types.
  \begin{claim}\label{claim:tpsclosed}
    Let $s$ be a Skolem term and $a=s(l_1,\dots, l_n)$ where  $l_1<\dots<l_n\in L$. If
    $b=s(k_1,\dots,k_n)$ for some $k_1<\dots<k_n\in L$, then $b\models tp(a)$.
  \end{claim}
  Assuming the claim, we have that every type in ${\N_\L}^\L$ is a countable
  union of Skolem terms with parameters all ordered $L$-tuples. Let
  $(s_i)_{i\in\omega}$ be an enumeration of these Skolem terms. Then it is
  definable by a $\Sinf{1}$ formula of the form 
  \[ y\in tp(a) \Leftrightarrow \bigvvee_{i\in\omega} \exists
    x_1,\dots,x_{m(i)}\ y=s_i(x_1,\dots,
  x_{m(i)})\]
  where the $s_i$ can be viewed as a formula in the language of $\PA$ and thus
  are $\Dinf{1}$ interpretable in $\L$.
  \begin{proof}[Proof of \cref{claim:tpsclosed}]
    Note that $(k_1,\dots,k_n)\models tp(l_1,\dots,l_n)$. Say
    $b=s(k_1,\dots,k_n)$ and that $b\not\in tp(a)$, then there is $\psi$ such
    that ${\N_\L}^\L\models \psi(b)$ and ${\N_\L}^\L\not\models \psi(a)$. So 
    ${\N_\L}^\L\models \exists x ( x=s(l_1,\dots,l_n) \land \neg \psi(x) )$ and
  ${\N_\L}^\L\models \exists x ( x=s(k_1,\dots,k_n) \land \psi(x) )$,
      a contradiction as $(k_1,\dots,k_n)\models tp(l_1,\dots,l_n)$.
  \end{proof}
\end{proof}
We have thus shown the following.
\begin{lemma}
  For every linear ordering $\L$, $(\N_\L)_{(\alpha)}$ is $\Dinf{1}$
  interpretable in $\L$.
\end{lemma}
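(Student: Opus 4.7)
The plan is to combine the two ingredients developed in the preceding paragraphs. In the context of this section—which aims at \cref{thm:loandpa} about structural $\omega$-jumps, and where the preceding lemma is explicitly about $\Pinf{\omega}$-types of tuples in $\N_\L$—the ordinal in question is $\alpha=\omega$, so the claim is that the canonical structural $\omega$-jump $(\N_\L)_{(\omega)}$ is $\Dinf{1}$-interpretable in $\L$.

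First I would recall that the underlying first-order structure $\N_\L$ is already $\Dinf{1}$-interpretable in $\L$. This was established earlier by applying the relativized \cref{lem:urice} to the canonical $T$-decidable presentation of $\N_\L$ (the quotient of Skolem terms over $\L$-ascending generator tuples by the equivalence induced by products of the fixed minimal type). This supplies the domain $Dom_{\N_\L}^\L$, the equivalence $\sim$, and $\Dinf{1}$-definitions in $\L$ of all basic $\tau$-predicates.

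Second, by \cref{def:strucjump}, $(\N_\L)_{(\omega)}$ is the expansion of $\N_\L$ by new relation symbols $R_i$, with $R_i(\bar b)\iff \bar a_i \leq_\omega \bar b$, where $(\bar a_i)_{i\in\omega}$ enumerates representatives of the $\omega$-back-and-forth classes. By \cref{lem:typesandbf}, $\leq_\omega$ coincides with equality of first-order types, so each $R_i$ is exactly the realization set of $tp^{\N_\L}(\bar a_i)$. The preceding lemma gives a $\Sinf{1}$-definition in $\L$ of this realization set, while the paragraph immediately before that lemma already observes $\Pinf{1}$-definability (a type's realization set is the intersection of $\Dinf{1}$-definable formula-satisfaction sets obtained from the interpretation of $\N_\L$'s first-order structure in $\L$, which is a $\Pinf{1}$-formula). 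Thus each $R_i$ is $\Dinf{1}$-definable in $\L$.

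Assembling these pieces—the domain and equivalence from the first-order interpretation, the $\Dinf{1}$-definitions of the $\tau$-predicates, and the $\Dinf{1}$-definitions of the expanded predicates $R_i$—into a single interpretation tuple in the sense of \cref{def:infint} produces a $\Dinf{1}$-interpretation of $(\N_\L)_{(\omega)}$ in $\L$. The main conceptual obstacle, namely the $\Sinf{1}$-definability of first-order types (which used the indiscernibility of $p(x)$ and the closure of types under order-preserving reshufflings of generators), has already been overcome in the preceding lemma; the present lemma is a bookkeeping step packaging everything in the form required to feed into \cref{cor:alphajumpbiint} during the proof of \cref{thm:loandpa}.
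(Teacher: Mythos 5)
Your proposal is correct and matches the paper's implicit argument: the lemma is stated there with no separate proof, being just the summary (``We have thus shown the following'') of the preceding development, namely the $\Dinf{1}$-interpretation of the elementary diagram of $\N_\L$ in $\L$ via \cref{lem:urice} combined with the $\Dinf{1}$-definability of the type realization sets, which by \cref{lem:typesandbf} are exactly the relations $R_i$ of the structural $\omega$-jump. Your reading of $\alpha$ as $\omega$ and your assembly of the pieces into an interpretation in the sense of \cref{def:infint} is precisely what the paper intends.
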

\subsection{Interpreting $\L$ in $\N_\L$}
Our goal is to show that $\L$ is $\Dinf{\omega+1}$ interpretable in $\N_\L$.
This follows from the following lemma which can be extracted from various
results of Gaifman.
\begin{lemma}\label{lem:ordertypepx}
  For any $\L$, $\{a: tp^{\N_\L}(a)=p(x)\}=L$. In particular, $(\{a:
  tp^{\N_\L}(a))=p(x)\}, \leq^{\N_\L})\cong \L$.
\end{lemma}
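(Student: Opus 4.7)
The plan is to establish the set equality $\{a : tp^{\N_\L}(a) = p(x)\} = L$ first, and then verify that the order on $L$ inherited from $\N_\L$ agrees with $\L$. The inclusion $L \subseteq \{a : tp^{\N_\L}(a) = p(x)\}$ is immediate from Gaifman's construction: by definition each generator $x_l$ realizes $p$ in the finite product extension in which it first appears, and this transfers to $\N_\L$ by elementarity of the embedding into the direct limit.

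For the reverse inclusion the key tool is indiscernibility. Given $a \in \N_\L$ realizing $p$, I represent $a$ as $s(x_{l_1}, \ldots, x_{l_k})$ for some Skolem term $s$ and generators with $l_1 < \cdots < l_k$. Assuming toward a contradiction that $a$ is not one of the $x_{l_i}$, the totality of the order in $\PA$ places $a$ in a unique slot among the $x_{l_j}$; say $x_{l_i} < a < x_{l_{i+1}}$, with the endpoint positions treated analogously. I would then embed $\N_\L \preceq \N_{\L'}$ for some countable $\L' \supseteq \L$ containing a new point $m$ strictly between $l_i$ and $l_{i+1}$ in $\L'$. Both $(x_{l_1},\ldots,x_{l_i},a,x_{l_{i+1}},\ldots,x_{l_k})$ and $(x_{l_1},\ldots,x_{l_i},x_m,x_{l_{i+1}},\ldots,x_{l_k})$ are strictly increasing $(k+1)$-tuples of $p$-realizers in $\N_{\L'}$, so by indiscernibility they satisfy the same parameter-free formulas. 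Applying this to the formula asserting that the $(i+1)$-st coordinate equals $s$ applied to the others, we obtain $x_m = s(x_{l_1},\ldots,x_{l_k})$. The right-hand side lies in $\N(x_{l_1},\ldots,x_{l_k})$, whereas $x_m$ was introduced as a fresh $p$-element at a later stage of the direct-limit construction and so does not lie there---a contradiction.

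The order isomorphism in the ``in particular'' statement then reduces to showing that $l < l'$ in $\L$ implies $x_l < x_{l'}$ in $\N_\L$, which follows directly from unboundedness: in $\N(x_l)(x_{l'})$ the element $x_{l'}$ realizes $p$ over $\N(x_l)$ and so exceeds every Skolem term with parameters in $\N(x_l)$, in particular $x_l$ itself. The main obstacle I expect is justifying the step that embeds $\N_\L$ into $\N_{\L'}$ while keeping $x_m$ genuinely outside $\N(x_{l_1},\ldots,x_{l_k})$; this is handled by the functoriality of Gaifman's $\L$-canonical extension under inclusions of linear orders together with the fact that each successive product extension in the direct-limit construction is a proper $p$-extension.
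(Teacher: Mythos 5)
Your proposal is correct in outline but argues the hard inclusion $\{a: tp^{\N_\L}(a)=p(x)\}\subseteq L$ by a genuinely different route. The paper's proof runs through Gaifman's gap machinery: since minimal types are \emph{rare} (no two elements of the same gap realize the same minimal type, and a realizer of a rare type lies in the Skolem closure of any element of its gap), a realizer $b=s(l_1,\dots,l_m)$ of $p(x)$ that is not a generator would have to occupy a gap of $\N(l_1,\dots,l_m)$ different from $gap(0)$ and from every $gap(l_i)$; but the gap order type of $\N(l_1,\dots,l_m)$ is exactly $1+m$, so there is no room. You instead use the indiscernibility clause of minimality directly: pass to $\N_{\L'}$ with a fresh generator $x_m$ in the same order slot as $a$, transfer the formula $v_{i+1}=s(v_1,\dots,\widehat{v_{i+1}},\dots,v_{k+1})$ between the two increasing tuples of $p$-realizers, and conclude $x_m=s(x_{l_1},\dots,x_{l_k})\in \N(x_{l_1},\dots,x_{l_k})$, contradicting independence of the generators. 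This is a clean and valid alternative that makes the indiscernibility hypothesis do the work the paper assigns to rarity. The one soft spot is exactly the one you flag: the fact that $x_m\notin\N(x_{l_1},\dots,x_{l_k})$ when $m$ is inserted \emph{between} $l_i$ and $l_{i+1}$. ``Each successive product extension is proper'' handles insertion at the top, but for a middle insertion you need that omitting any one generator from a $p^{k+1}$-extension yields a proper submodel, and the standard proof of that is again rarity plus the gap count ($k+2$ versus $k+1$ gaps). So your argument does not avoid the gap machinery; it relocates it into a citable independence lemma. Your treatment of the easy inclusion and of the ``in particular'' clause (ordering of generators via unboundedness of the definable extension of $p$ over $\N(x_l)$) matches the facts the paper cites from Kossak--Schmerl and is fine.
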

\begin{proof}[Proof sketch.]
To obtain a proof of this lemma we need the definition of the \emph{gap} of an element.
For fixed $\M\models PA$, let $\mathcal F$ be the set of first-order definable functions $f:
M\to M$ for which $x\leq f(x)\leq f(y)$ whenever $x\leq y$. Then for any $a\in M$
let $gap(a)$ be the smallest set $S$ with $a\in S$ and if $b\in S$,
$f\in\mc F$, and $b\leq x\leq f(b)$ or $x\leq b\leq f(x)$, then $x\in S$.
Notice that the gaps of $\M$
partition $\M$ into $\leq^\M$-intervals. We can thus obtain
an equivalence relation $a\sim b \LR gap(a)=gap(b)$
and study the order type $(\M{/}{\sim}, \leq^\M)$, the order type of the gaps of $\M$.
Notice, that if $\M$ is the prime model of $T$, then the order type of its gaps
is $1$ and unsurprisingly, the order type of the gaps of $\N_\L$ is
$1+o(\L)$~\cite[Corollary 3.3.6]{kossak2006}.

Minimal types and gaps interact in interesting ways. Minimal types are
rare, that is no two elements in the same gap can realize the same minimal
type~\cite[Lemma 3.1.15]{kossak2006}. A different characterization of a rare
type $p(x)$ is that if $a$ is an element realizing it and $b\in gap(a)$, then
$a$ is in the Skolem closure of $b$~\cite[Theorem 3.1.16]{kossak2006}. So, in particular, if the type $p(x)$ is non-principal, then in
the first gap, the gap of $0$, $p(x)$ can not be realized.
Thus, we get that $\L$ and the ordering of the elements of type $p(x)$ in
$\N^\L$ are isomorphic.

It remains to show that the elements of type $p(x)$ are precisely the
generators. Clearly the type of every generator is $p(x)$ by construction. To see that $L\supseteq \{a:tp^{\N_\L}(a)=p(x)\}$,
note that every $b\in \N_\L$ is of the form $s(l_1,\dots,l_m)$ where $s$ is
a Skolem term and $l_1,\dots,l_m$ is a sequence of generators. Then $b\in
\N(l_1,\dots,l_m)$ and thus by rarity, if $b$ had type $p(x)$, then $gap(b)\neq
gap(l_i)$ for $i<m$ and $gap(b)\neq gap(0)$. But the order type of the gaps in
$\N(l_1,\dots,l_m)$ is $1+m$ and every $l_i$ and $0$ sit in their own gap, so this is impossible.
\end{proof}
\cref{lem:ordertypepx} yields a natural interpretation of $\L$ in $\N_\L$ given by $Dom_\L^{\N_\L}=\{ a:
a\models p(x)\}$, $\sim=id$ and $\leq^{\L^{\N_\L}}=\leq^{\N_\L}$. The only
complicated relation is $Dom_{\L}^{\N_\L}$ which has a $\Pinf{\omega}$
definition given by the conjunction of all formulas in $p(x)$. We thus obtain
the required interpretation.
\begin{lemma}
  Every linear ordering $\L$ is $\Dinf{\omega+1}$ interpretable in $\N_\L$.
\end{lemma}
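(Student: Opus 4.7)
The plan is to formalize the interpretation sketched immediately before the lemma and to verify the complexity bound on each of its ingredients. By \cref{lem:ordertypepx}, the set $D := \{a \in \N_\L : \N_\L \models p(a)\}$ coincides as a subset of $\N_\L$ with the set of generators, and $(D, \leq^{\N_\L}\restrict D) \cong \L$. I will therefore take $Dom_\L^{\N_\L} := D$ (viewed as a set of length-$1$ tuples from $\N_\L^{<\omega}$), let $\sim$ be equality, and interpret the order of $\L$ by $\leq^{\N_\L}$ itself. The witnessing bijection $f_\L^{\N_\L} : D \to L$ is the identification supplied by \cref{lem:ordertypepx}.

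Equality and $\leq^{\N_\L}$ are atomic, hence $\Dinf{1}$ and in particular $\Dinf{\omega+1}$, so the only real work is to check that $Dom_\L^{\N_\L}$ is $\Dinf{\omega+1}$-definable in $\N_\L$. The defining formula I have in mind is simply $\bigwwedge_{\phi(x) \in p} \phi(x)$, the infinitary conjunction of all first-order formulas belonging to the minimal type $p(x)$. Each $\phi$ is first-order and therefore $\Sinf{n}$ for some finite $n$, so this conjunction is a $\Pinf{\omega}$ formula. Any $\Pinf{\omega}$ formula trivially sits inside both $\Pinf{\omega+1}$ and $\Sinf{\omega+1}$ (by prefixing a vacuous universal or existential quantifier, respectively), so $D$ is $\Dinf{\omega+1}$-definable in $\N_\L$.

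I do not expect any real obstacle here: the substantive model-theoretic content, namely that no non-generator realizes $p(x)$ and that the resulting order on $D$ is isomorphic to $\L$, has already been extracted in \cref{lem:ordertypepx} from Gaifman's construction. What remains is only the straightforward syntactic observation that a first-order $1$-type is $\Pinf{\omega}$-definable, together with the containment of $\Pinf{\omega}$ in $\Dinf{\omega+1}$.
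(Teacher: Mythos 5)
Your proposal is correct and follows essentially the same route as the paper: take $Dom_\L^{\N_\L}$ to be the realizers of $p(x)$ (which \cref{lem:ordertypepx} identifies with the generators ordered as $\L$), let $\sim$ be equality and the order be $\leq^{\N_\L}$, and observe that the only nontrivial relation is the domain, defined by the $\Pinf{\omega}$ conjunction of the formulas in $p(x)$, hence $\Dinf{\omega+1}$. Your extra remark that $\Pinf{\omega}\subseteq\Sinf{\omega+1}\cap\Pinf{\omega+1}$ just makes explicit what the paper leaves implicit.
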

To finish the proof of \cref{thm:loandpa} it remains to show that $\N_\L$ and
$\L$ are infinitary bi-interpretable, that the function $f_{\L}^{\N_\L}\circ
\tilde f_{\N_\L}^\L: {\L^{(\N_\L)}}^\L\to \L$ is $\Dinf{1}$ definable in $\L$ and that
$f^\L_{\N_\L}\circ \tilde f^{\N_\L}_\L: {(\N_\L)^\L}^{(\N_\L)}\to \N_\L$ is $\Dinf{\omega+1}$ definable in
$\N_\L$. We have that $Dom_\L^{Dom_{\N_\L}^\L}=\{ \bar a\in L^{<\omega}:
tp^{(\N_\L)^\L}(\bar a)=p(x)\}=L$. Thus, by \cref{lem:ordertypepx}, $f_{\L}^{\N_\L}\circ
\tilde f_{\N_\L}^\L=id_\L$ and so trivially $\Dinf{1}$ definable in $\L$.

On the other hand $Dom_{\N_\L}^{Dom_\L^{\N_\L}}=\{\langle n, a_1,\dots,
a_m\rangle: n\in\omega, (\forall i<m) tp^{\N_\L}(a_i)=p(x)\}$ and 
\begin{multline*}Graph_{f^\L_{\N_\L}\circ \tilde f^{\N_\L}_\L}=\\\{ (\langle n, a_1\dots
    a_m\rangle ,b): \langle n, a_1\dots
    a_m\rangle\in Dom_{\N_\L}^{Dom_\L^{\N_\L}} \mathbin\& b=s_n(a_1,\dots,a_m)\}\end{multline*}
which is easily seen to be $\Dinf{\omega+1}$ definable in $\N_\L$.
We have thus shown that the interpretations $\L^{\N_\L}$ and $(\N_\L)^{\L}$
satisfy all the conditions of \cref{it:dalphaasym} in
\cref{cor:alphajumpbiint}. Hence, we obtain \cref{thm:loandpa} and can close by
finishing the proof of \cref{mainthm}.

\begin{proof}[Proof of \cref{mainthm}]
  First, recall that by \cref{thm:srlowerbound} no model of $\PA$ except the
  standard model has Scott rank $n<\omega$. By \cref{thm:primesromega}
  non-standard prime models have Scott rank $\omega$ and thus $\omega$ is the
  least element of the Scott spectrum of any completion of $\PA$ that is not
  true arithmetic. Similarly, for true arithmetic, by \cref{thm:srlowerboundta}
  the only element of the Scott spectrum below $\omega+1$ is $1$.

  To obtain models of Scott rank $\alpha$ for $\alpha>\omega$ we use \cref{thm:loandpa} that says
  that for every completion $T$ of $\PA$ and every linear order $\L$ there is a model $\N_\L$ of $T$
  such that $\L$ is $\Dinf{1}$ bi-interpretable with ${\N_\L}_{(\omega)}$. Hence,
  $SR({\N_\L}_{(\omega)})=SR(\L)$ and thus, by \cref{cor:srjump},
  $SR(\N_\L)=\omega+SR(\L)$. It remains to show that there are linear orderings
  of every Scott rank. This follows from Ash's characterization of the
  back-and-forth relations~\cite{ash1986}: $SR(\omega^\alpha)=2\alpha$,
  $SR(\omega^\alpha\cdot 2)=2\alpha+1$. For detailed calculations
  see~\cite[Proposition 19]{alvir2020}. To obtain a linear ordering of Scott
  rank $1$ one can either consider finite linear orderings or $\eta$, the order
  type of the rationals. Both are uniformly $\Dinf{1}$ categorical.
\end{proof}

\printbibliography

\end{document}